\def\elle#1{L^{#1}(\Omega)}
\def\w{W_0^{1,2}(\Omega)}
\def\wm{W_0^{1,m^{*}}(\Omega)}
\def\io{\int_\Omega}
\def\norma#1#2{\|#1\|_{\lower 4pt \hbox{$\scriptstyle #2$}}}
\def\un{u_n}
 \def\nn{{\global\advance\lavn by 1}C_{ \the\lavn }}
\newcommand{\Rd}{{\mathbb R^d}}
\DeclareMathOperator{\diver}{div}
\DeclareMathOperator{\dist}{dist}
\DeclareMathOperator{\sign}{sign}
\newtheorem{theorem}{Theorem}%
\newtheorem{proposition}[theorem]{Proposition}%
\newtheorem{corollary}[theorem]{Corollary}%
\newtheorem{lemma}[theorem]{Lemma}%
\theoremstyle{definition}
\newtheorem{remark}[theorem]{Remark}%
\newtheorem*{remark*}{Remark}%
\begin{document}

\title{Failure of the {Hopf-Oleinik lemma} for {a} linear elliptic problem with singular convection of non-negative divergence}
\author{
	L. Boccardo%
	\thanks{Istituto Lombardo \& Sapienza Universit\`a di Roma. \texttt{boccardo@mat.uniroma1.it}}
	\and  
	J. I. D\'iaz%
	\thanks{Inst. de Matemática Interdisciplinar \& Fac. de Matemáticas, U. Complutense de Madrid. \texttt{jidiaz@ucm.es}}
	\and 	
	D. G\'omez-Castro%
	\thanks{Inst. de Matemática Interdisciplinar \& Fac. de Matemáticas, U. Complutense de Madrid. \texttt{dgcastro@ucm.es}}
} 

\maketitle 

\begin{abstract}
In this paper we study existence, uniqueness, and integrability of solutions to the Dirichlet problem
$-\mathrm{div}( M(x) \nabla u ) = -\mathrm{div} (E(x) u) + f$ in a bounded domain of $\mathbb R^N$ with $N \ge 3$. We are particularly interested in singular $E$ with $\mathrm{div} E \ge 0$. We start by recalling known existence results when $|E| \in L^N$ that do not rely on the sign of $\mathrm{div} E $. Then, under the assumption that $\mathrm{div} E \ge 0$ distributionally, we extend the existence theory to $|E| \in L^2$. For the uniqueness, we prove a comparison principle in this setting. Lastly, we discuss the particular cases of $E$ singular at one point as $Ax /|x|^2$, or towards the boundary as $\mathrm{div} E \sim \mathrm{dist}(x, \partial \Omega)^{-2-\alpha}$. In these cases the singularity of $E$ leads to $u$ vanishing to a certain order.
In particular, this shows that the {Hopf-Oleinik lemma, i.e. $\partial u / \partial n < 0$,} 
fails in the presence of such singular drift terms $E$.
\end{abstract}

\section{Introduction}

It is well known that many relevant applications lead to the presence of a convection term in the correspondent model which, in its simplest formulation, leads to a boundary value problem for linear elliptic second order equation of the following type:
\begin{equation}
	\label{eq:PDE}
	\begin{dcases}
		-\diver (M(x) \nabla u) = - \diver(u E(x)) + f(x) & \text{in } \Omega \\
		u = 0 & \text{on } \partial \Omega.
	\end{dcases}
\end{equation}
Here $\Omega \subset \mathbb R^N$,
$N\ge 3$,
is an open, bounded set, and we assume that $M \in L^\infty(\Omega)^{N\times N}$ is elliptic 
\begin{equation*} 
	M (x)\xi \cdot \xi \ge \alpha |\xi|^2	, \qquad \forall \xi \in \mathbb R^N \text{ and a.e.\ } x \in \Omega.
\end{equation*}
According to the regularity of the right-hand side datum $f(x)$ it is natural to search the solution in the energy space $W^{1,2}_0 (\Omega)$ (case of $f\in H^{-1}(\Omega )$: see, e.g. \cite{Stampacc,Gilbarg-Trudinger,Boccardo2009}), or in a larger Sobolev space if $f$ is singular (see \cite{Boccardo2009});  when $f\in L^{1}(\Omega )$, see, for instance, \cite{Brezis-Strauss}, or when $L^{1}(\Omega ,\delta )$ with $\delta (x)=d(x,\partial \Omega )$, see, e.g., 
\cite{Brezis-Cazenave et al,diaz-gc-rakotoson-temam2018}.

In the mentioned references it assumed that the convection term is regular (for instance  $E\in W^{1,\infty }(\Omega )$) and that it satisfies an additional condition which helps  to have a maximum principle: 
\begin{equation}
	\label{eq:diver E non-negative}
\diver E\geq 0\text{ a.e. on }\Omega .
\end{equation}

More recently, some effort has been devoted to get an existence and regularity theory under more general conditions on the convection term $E$ by different authors (see, e.g. \cite{Boccardo2009}, \cite{Boccardo+Orsina2019} and their references). For instance, solutions in the energy space can be considered under the conditions $|E| \in L^N (\Omega)$ and $f \in L^{\frac{2N}{N+2}} (\Omega)$. 
In \cite{diaz-gc-rakotoson-temam2018} and \cite{diaz+gc+rakotoson2017schrodinger}
the authors study
the case in which $|E| \in L^N (\Omega)$ and $\diver E=0$ in $\Omega$ and $E\cdot n=0$ on $\partial \Omega $,  $f\in L^{1}(\Omega ,\delta )$.
{See also \cite{Droniou2002,Kim2020}.} 

\bigskip 

In this paper, we will show that \eqref{eq:diver E non-negative} makes $\diver E$ behave like a non-negative potential in the Schrödinger case, and we can apply techniques from that setting. See, for example,  \cite{diaz+gc+rakotoson2017schrodinger,diaz-gc-rakotoson-temam2018,diaz-gc-vazquez2018,GC+Vazquez2018}.
We focus on the case where \eqref{eq:diver E non-negative} holds in distributional sense.

The paper is structured as follows. First, in \Cref{sec:E in L^N} we review known results for the case $|E| \in L^N$ and $f \in L^{\frac{2N}{N+2}} (\Omega)$ which were published in \cite{Boccardo2009}, were shown there is a unique weak solution of \eqref{eq:PDE} that can be constructed by approximation. In \Cref{sec:E in L^2} we show that if $|E| \in L^2 (\Omega)$, $\diver E \ge 0$, and $f \in L^m (\Omega)$ for some $m > 1$ then the same approximation procedure converges to a weak solution of \eqref{eq:PDE}, and we give some a priori bounds for this solution. In \Cref{sec:comparison principle} we show that, if we also assume $f \in L^{\frac{2N}{N+2}} (\Omega)$, then this constructed solution is the unique weak solution of \eqref{eq:PDE}.

\bigskip 

Then we move to discussing interesting examples that fall in this setting. 
In \Cref{sec:convection singular one point} we focus on the case 
\begin{equation}
	E (x) =	A \frac{x}{|x|^2}, 	
\end{equation}
which is somehow in the limit of theory since it is not in $L^N(\Omega)$ but it is in $L^{r} (\Omega)$ for $r \in [1,N)$. 
In \cite{Boccardo+Orsina2019} the authors examined the more general class 
\begin{equation} 
	|E|\le\frac{|A|}{|x|}.
\end{equation}
The authors show existence of solutions $u$, where the summability is reduced as $|A|$ is increased.
Their results indicate that the sign of $A$ should play a role, but the application of Hardy's inequality (which they use in a crucial way) is not able to detect this fact. 
In \Cref{prop:convection at one point} we show that if $N > 1$, $f \in L^m (\Omega)$ for suitable $m$, and $A > 0$ then we can use the sign of $\diver E$ to deduce that
{the solution $u_A$ of \eqref{eq:PDE} with $E = A \frac{x}{|x|^2}$ satisfies}
\begin{equation*}
	u_A \to 0 \text{ in } L^1 (\Omega) \text{ as } A \to +\infty.
\end{equation*}
By the contrary, when $A < 0$ we cannot improve the result in \cite{Boccardo+Orsina2019}. 
Notice that this is similar to the equation $L(u_B)+B\,u_B=f$, whereas $B\to\infty$ we have  $u_B\to0$.

\bigskip 

Lastly, in \Cref{sec:convection singular boundary}, we discuss the case where $E$ is 
{suitably}
singular only on the boundary. We present an example showing that if $\diver E$ behaves like 
{
	$d(x,\partial \Omega )^{-2-\gamma}$} 
for some $\gamma > 0$ and $f$ is bounded, then the solutions are flat on the boundary, i.e.
$$		
	|u(x)| \le C \dist(x, \partial \Omega)^\alpha \text{ for some } \alpha > 1.  
$$
{
In particular, this shows that the {Hopf-Oleinik lemma}, i.e. $\frac{\partial u}{\partial n} < 0$ on $\partial \Omega$, fails in the presence of such singular drift terms $E$.
Our example can be easily extended to a more general class of $E$, as we comment in \Cref{sec:extension}.}
Again, we use the fact that $\diver E$ acts as a potential. However, in the Schrödinger equation {it} is sufficient that $V(x) \ge C \delta^{-2}$ to get flat solutions, whereas for $E$ we need a strictly larger exponent (see \Cref{rem:19 de Ildefonso}).
Questions of this
type are quite relevant in the framework of linear Schrödinger equations
associated to singular potential since they can be understood as complements
to the Heisenberg Incertitude Principle (see, e.g. \cite{Diaz (sema1),Diaz (sema2),diaz+gc+rakotoson2017schrodinger,diaz-gc-rakotoson-temam2018,OrPonce,diaz-gc-vazquez2018}). 

\bigskip 

We conclude with some further comments and open problems in \Cref{sec:extension}.

\section{Known results when \texorpdfstring{$|E| \in L^N$}{|E| in LN}}
\label{sec:E in L^N}

We define the Sobolev conjugate exponent
\begin{equation*}
	m^* = \frac{mN}{N-m} \quad \text{if } m \le N, 
	\qquad \text{ and } \qquad  
	m^{**} = (m^*)^* 
	=\frac{mN}{N-2m} \quad \text{if } m \le \frac N 2.
\end{equation*}
We have that $m^{**} \in [1,\infty]$ for 
$\frac{N}{N+2} \le m \le \frac{N}{2}$. 
Notice that $m^* \ge 2$ if and only if $m \ge \frac{2N}{N+2} = (2^*)'$.
Notice that, since $m \ge 1$ we have $m^* \ge m$. In order to compute explicit a priori estimates, we use the Sobolev embedding constant $S_p$ such that, for $1 < p <+\infty$
\begin{equation}
	\label{eq:Sobolev embedding}
	S_p \| u \|_{L^{p^*} (\Omega)} \le \| \nabla u \|_{L^p(\Omega)}.
\end{equation}
We point out the relevance of the constants, for $N > 2$ of
$
(2^*)' = \frac{2N}{N+2} .
$
This constant {depends only of $N$}.
Since we are going to require the Sobolev embedding for $p =2$, we assume that $N \ge 3$. 
In \cite{Boccardo2009} the author proves the following existence theorem with a priori estimates. 
\begin{theorem}[\cite{Boccardo2009}]
	\label{thm:Boccardo well-posedness}
	Let $f \in L^{\frac{2N}{N+2}} (\Omega)$ and $|E| \in L^N(\Omega)$. Then, there exists a unique weak solution $u$ {of \eqref{eq:PDE}} in the sense that
	\begin{equation*}
		u \in W^{1,2} _0(\Omega) \text{ is such that } \io M(x)\nabla u \nabla v
		= \io u\,E(x)\cdot\nabla v
		+\io  f (x)\,v(x), \; 
		\forall\;v\in W^{1,2}_0 (\Omega).
	\end{equation*}
	and it satisfies:
	\begin{enumerate}
		\item Logarithmic estimate:
		\begin{equation*}
			\left( \int_\Omega |\log (1 + |u|)|^2  \right)^{\frac {2}{2^*}} \le \frac{1}{S_2^2 \alpha^2} \int_\Omega|E|^2 + \frac{2}{S_2^2 \alpha} \int_\Omega |f|,
		\end{equation*}
		\item Gradient estimate: there exists $C = C(\alpha, N)$ such that
		\begin{equation}
			\label{eq:Theorem 1 gradient estimate}
			\int_\Omega |\nabla u|^2 
			\le C  \left(
			\| E \|_{L^N}^2   + \| f \|_{L^{\frac{2N}{N+2}}}^2 \right).
		\end{equation}
		\item 
		Stampacchia-type summability: For $m \in [ \frac{2N}{N+2} , \frac N 2 )$  there exists a constant $C = C( m , \alpha,$ $N, \| E \|_{L^N})$ such that
		\begin{equation}
			\label{eq:Lm** estimate}
			\| u \|_{m^{**}} \le C \| f \|_{m}.
		\end{equation}
		\item 
		Stampacchia-type boudedness:
		Let $r > N$ and $m > \frac N 2$. There exists $C$ such that
		\begin{equation}
			\label{eq:Linfty estimate}
			\| u \|_{L^\infty} \le C( m,r, \alpha, \| f \|_{L^m}, \| E \|_{L^r} ).
		\end{equation}
	\end{enumerate}
\end{theorem}
\begin{remark}
	The natural theory for this problem in energy space is precisely $|E| \in L^N (\Omega)$, since in the weak formulation we need to justify a term of the form $E u \nabla v$, where $u, v \in W_0^{1,2} (\Omega)$. This means that $u \in L^{2^*}$ whereas $\nabla v \in L^2$. 
	So we always have that $u E \in L^2 (\Omega)$.
\end{remark}

In  \cite{Boccardo2009} the main tool to study the linear problem \eqref{eq:PDE} are the auxiliary non-linear Dirichlet problems 
\begin{equation}
	\label{pn}
	\begin{dcases}
		-\diver (M(x) \nabla u_n) = - \diver\left( \frac{u_n}{1+ \frac 1 n u_n } E_n(x)\right) + f_n(x) & \Omega \\
		u = 0 & \partial \Omega,
	\end{dcases}
\end{equation}
where the author take $f_n = T_n (f)$ a truncation of $f$ through the family
\begin{equation*}
	T_n (s) = \begin{dcases}
		s & |s| \le k , \\
		k \sign(s) & |s| > k,
	\end{dcases}
\end{equation*}
and $E_n = \frac{E}{1+\frac 1 n |E|}$. We will take advantage of a similar approximation.

\begin{remark}
Since the problem is linear, for $t \in \mathbb R$ we have that $t u$ is solution of 
$$-\diver (M(x) \nabla [t\,u]) = - \diver([t\,u] E(x)) + t\,f(x), $$ 
and that $E$ does not change. Thus, {using \eqref{eq:Theorem 1 gradient estimate}}
$$
t^2\int_\Omega |\nabla u|^2 
\le C  \left(
\| E \|_{L^N}^2 
+t^2 \| f \|_{L^{\frac{2N}{N+2}}}^2 \right).
$$
Dividing by $t^{-2}$ and taking the limit as $t\to\infty$ gives	
\begin{equation} 
	\label{eq:H1 estimate}
	\int_\Omega |\nabla u|^2 
	\le C     \| f \|_{L^{\frac{2N}{N+2}}}^2.
\end{equation}
Notice that in \Cref{thm:uniqueness} we will prove this fact for the case $\diver E \ge 0$.
\end{remark}

\section{Existence theory when \texorpdfstring{$|E| \in L^2$}{|E| in L2}  and \texorpdfstring{$\diver E \ge 0$}{divE ge 0}}
\label{sec:E in L^2}

The structural assumption in this section is the following:
\begin{equation}
	\label{e}
	\begin{dcases}
		E \,\hbox{   belongs to the  Lebesgue space } \;(\elle2)^N,
		\\
		\diver E \ge 0 \text{ in } \mathcal D' (\Omega), \;\hbox{ that is }\,\io E\cdot \nabla \phi \leq 0,\;\forall\;0\leq\phi\in\w .
	\end{dcases}
\end{equation}

\begin{theorem}
	\label{thm:existence E in L^2}
	Assume \eqref{e} and
	\begin{equation}
		\label{f}
		f\in\elle m, \;1<m<\frac N2,
	\end{equation}
	and let $p = \min\{ 2 , m^* \}$.  Then, there exists a weak solution $u$
	{of \eqref{eq:PDE}}
	in the sense that 
	\begin{equation}
		\label{eq:weak solution}
		u \in W^{1,p} _0(\Omega) \text{ is such that } \io M(x)\nabla u \nabla v
		= \io u\,E(x)\cdot\nabla v
		+\io  f (x)\,v(x), \; 
		\forall\;v\in W^{1,\infty}_0 (\Omega).
	\end{equation}
	Furthermore, it satisfies
	\begin{equation}
		\label{est2}
		\begin{cases}
			\norma{u}{\wm}
			\leq C_m\,
			\norma{f}{m},&\hbox{if }1<m<\frac{2N}{N+2};
			\\
			\norma{u}{\w}+\norma{u}{m^{**}}
			\leq \tilde{C_m}\,
			\norma{f}m ,&\hbox{if }\frac{2N}{N+2}\leq m {<} \frac N2.
		\end{cases}
	\end{equation}
\end{theorem} 
\begin{remark}
	Due to the gradient estimates, we can extend \eqref{eq:weak solution} to all $v \in W^{1,q}_0 (\Omega)$ by approximation, 
	where $q=p'$.
\end{remark}

Since the construction of solutions in the proof of \Cref{thm:existence E in L^2}	is achieved by approximation, we have that
\begin{corollary}
	The solutions constructed in \Cref{thm:existence E in L^2}	satisfy \eqref{eq:Lm** estimate} and \eqref{eq:Linfty estimate}.
\end{corollary}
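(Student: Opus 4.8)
The plan is to exploit that the solution $u$ of \Cref{thm:existence E in L^2} is produced as a limit of approximate solutions $\un$, and to show that the bounds \eqref{eq:Lm** estimate} and \eqref{eq:Linfty estimate} hold for each $\un$ with constants independent of $n$. Recall that in that scheme $\un \in \w \cap \limi$ solves the problem with a regularised field $E_n$ satisfying $|E_n| \in L^N(\Omega)$ and $\diver E_n \ge 0$, and with datum $\fn \to f$ in $L^m(\Omega)$. For each fixed $n$ one could invoke \Cref{thm:Boccardo well-posedness}, but the constants there grow with $\|E_n\|_{L^N}$ (resp. $\|E_n\|_{L^r}$), which need not remain bounded when $|E| \notin L^N(\Omega)$. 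The real task is thus to re-derive the two estimates with constants depending only on $m$, $\alpha$ and $N$.

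I would run Stampacchia's argument on $\un$ with the test function $v = G_k(\un)$, and observe that the sign condition makes the convection term disappear. Setting $\Psi_k(s) = \tfrac12 (s^2 - k^2)^+$, so that $\Psi_k'(s) = s\,G_k'(s)$, $\Psi_k \ge 0$ and $\Psi_k(\un) \in \w$ with $\nabla \Psi_k(\un) = \un\, \nabla G_k(\un)$, an integration by parts (licit since $E_n$ is regular) gives
\begin{equation*}
	\io \un\, E_n \cdot \nabla G_k(\un) = \io E_n \cdot \nabla \Psi_k(\un) = - \io \Psi_k(\un)\, \diver E_n \le 0 .
\end{equation*}
On the other hand, the energy identity obtained from the weak formulation reads
\begin{equation*}
	\io M(x)\, \nabla \un \cdot \nabla G_k(\un) = \io \un\, E_n \cdot \nabla G_k(\un) + \io \fn\, G_k(\un),
\end{equation*}
so that ellipticity and the sign just established leave
\begin{equation*}
	\alpha \io |\nabla G_k(\un)|^2 \le \io \fn\, G_k(\un),
\end{equation*}
which is exactly the inequality satisfied by the purely diffusive operator $-\diver(M\nabla\,\cdot\,)$, with no dependence on $E_n$ left.

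From this inequality the classical Stampacchia iteration provides, uniformly in $n$, the summability bound $\|\un\|_{m^{**}} \le C(m,\alpha,N)\,\|\fn\|_m$ for $m \in [\frac{2N}{N+2}, \frac N2)$ and the boundedness bound $\|\un\|_{L^\infty} \le C(m,\alpha,N)\,\|\fn\|_{L^m}$ for $m > \frac N2$. Letting $n \to \infty$, the convergence $\un \to u$ (weakly in $\w$ and a.e., from the construction in \Cref{thm:existence E in L^2}) together with $\fn \to f$ in $L^m(\Omega)$ and the lower semicontinuity of the $L^{m^{**}}$ and $L^\infty$ norms (Fatou) transfers both bounds to $u$, which is \eqref{eq:Lm** estimate} and \eqref{eq:Linfty estimate}.

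The main obstacle is the uniformity of the constants: a direct use of \Cref{thm:Boccardo well-posedness} fails because its constants blow up with $\|E_n\|_{L^N}$. What saves the argument is that $\diver E_n \ge 0$ lets one discard the entire convection term rather than merely absorb it through H\"older and Sobolev, reducing everything to the standard estimates for the diffusion operator. The two points to check carefully are that the chosen regularisation preserves $\diver E_n \ge 0$ (as does, e.g., a positivity-preserving mollification) and that $\Psi_k(\un)$ is an admissible nonnegative test object against $\diver E_n$ at the approximate level, where $E_n$ is regular enough for the integration by parts to be literal.
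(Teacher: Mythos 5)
Your proposal is correct and follows essentially the same route as the paper: the corollary rests on the approximation construction, and the uniformity of the constants (in $n$, and independent of $E$) comes from discarding the convection term through the identity $\io E_n\cdot\nabla[H(\un)]\le 0$ for a non-negative primitive $H$, which is exactly the mechanism of the lemma preceding \Cref{thm:existence E in L^2} (cf.\ \eqref{20jenero}) — you correctly identify that a direct appeal to \Cref{thm:Boccardo well-posedness} would fail because its constants depend on $\|E_n\|_{L^N}$. The only notable difference is cosmetic: you run Stampacchia's iteration with $G_k(\un)$ (and the primitive $\Psi_k$), whereas the paper uses the power truncations $T_k(\un)|T_k(\un)|^{2\gamma-2}$ of Boccardo--Gallou\"et; both reduce to the same sign observation, and your variant has the small merit of making explicit that the $L^\infty$ bound \eqref{eq:Linfty estimate} is then obtained with a constant free of $\|E\|_{L^r}$ (just take care that the approximate convection term actually reads $\tfrac{\un}{1+\frac1n|\un|}E_n$, so the primitive must be $\int_0^s\frac{\tau G_k'(\tau)}{1+\frac1n|\tau|}\,d\tau$, which is still non-negative).
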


We say that $\un$ {is a} weak solution of \eqref{pn} if $u \in W_0^{1,2} (\Omega)$ is such that 
\begin{equation}
\label{wpn}
\io M(x)\nabla u_n \nabla v
= \io\frac{u_n}{1+\frac1n|u_n|}\,E_n(x)\cdot\nabla v
+\io  f_n (x)\,v(x),\quad
\forall\;v\in\w.
\end{equation}
The existence of a weak solution if $E_n \in L^2 (\Omega)^N$
is a consequence of the Schauder theorem. 
The proof of \Cref{thm:existence E in L^2} is based on the following approximation lemma  
\begin{lemma}
{Let $u_n$ be any weak solution of \eqref{wpn} with $E_n = E$},  \eqref{e}, \eqref{f}, and $f_n = T_n (f)$.  Then, for any weak solution $u_n$ of \eqref{wpn} we have that
\begin{equation}
	\label{est2b}
	\begin{cases}
		\norma{\un}{\wm}
		\leq C_m\,
		\norma{f}{m},&\hbox{if }1<m<\frac{2N}{N+2};
		\\
		\norma{\un}{\w}+\norma{\un}{m^{**}}
		\leq \tilde{C_m}\,
		\norma{f}m,&\hbox{if }\frac{2N}{N+2}\leq m\leq \frac N2.
	\end{cases}
\end{equation} 
where
\begin{equation}
	\label{20jenero}
	\text{$C_m$ does not depend on $E$}.
\end{equation}
Hence, up to a subsequence, $\{\un\}$ converges weakly in $L^{m^{**}}$.
\end{lemma}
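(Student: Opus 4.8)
The plan is to isolate a \emph{sign trick} for the convection term, which collapses \eqref{wpn} onto the classical a priori estimates for $-\diver(M\nabla \un)=\fn$, whose constants depend only on $\alpha$, $N$, $m$, $|\Omega|$ and the Sobolev constant $S_2$, and never on $E$. Write $\theta_n(s)=\frac{s}{1+\frac1n|s|}$, which is odd, bounded by $n$, and satisfies $\sign\theta_n(s)=\sign s$. For any nondecreasing odd function $\Phi$ with $\Phi(0)=0$ and $\Phi'\in L^\infty$, the composition $\Phi(\un)$ lies in $\w$ and is admissible in \eqref{wpn}. The key identity is
\[
\io \theta_n(\un)\,\Phi'(\un)\,E\cdot\nabla\un \;=\; \io E\cdot\nabla\Psi(\un),
\qquad \Psi(s):=\int_0^s\theta_n(t)\,\Phi'(t)\,dt .
\]
Here $\Psi$ is Lipschitz (as $\theta_n\Phi'$ is bounded), so $\Psi(\un)\in\w$; and since $\theta_n\Phi'\ge0$ on $(0,\infty)$, $\le0$ on $(-\infty,0)$, with $\Psi(0)=0$, one gets $\Psi(\un)\ge0$. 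Hence \eqref{e} forces $\io E\cdot\nabla\Psi(\un)\le0$, and testing \eqref{wpn} with $\Phi(\un)$ yields the \emph{$E$-free} inequality
\[
\alpha\io|\nabla\un|^2\,\Phi'(\un)\;\le\;\io M\nabla\un\cdot\nabla\un\,\Phi'(\un)\;\le\;\io\fn\,\Phi(\un).
\]

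In the energy range $\frac{2N}{N+2}\le m\le\frac N2$ I would first take $\Phi(s)=s$ and combine Hölder with the Sobolev inequality \eqref{eq:Sobolev embedding}, $\io|\fn||\un|\le\|f\|_{\frac{2N}{N+2}}\|\un\|_{2^*}\le S_2^{-1}\|f\|_{\frac{2N}{N+2}}\|\nabla\un\|_2$, to absorb and obtain $\|\nabla\un\|_2\le(\alpha S_2)^{-1}\|f\|_{\frac{2N}{N+2}}\le C\|f\|_m$ (the last step since $\Omega$ is bounded), giving the $\w$ bound in \eqref{est2b}. For the $L^{m^{**}}$ bound I would take $\Phi=G_k$ (so $\Phi'=\mathbf 1_{\{|s|>k\}}$, again with bounded derivative, keeping the sign trick clean) and run Stampacchia's iteration on the distribution function $k\mapsto|\{|\un|>k\}|$; since the governing inequality is exactly the one for $-\diver(M\nabla\cdot)=\fn$, this reproduces $\|\un\|_{m^{**}}\le C(m,\alpha,N)\|f\|_m$.

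In the sub-energy range $1<m<\frac{2N}{N+2}$ I would instead use the power test $\Phi(s)=\big((1+|s|)^{\eta}-1\big)\sign s$ with $\eta\in(0,1)$, for which $\Phi'(s)=\eta(1+|s|)^{\eta-1}$ is bounded. Writing $w=1+|\un|$ and rewriting $\io w^{\eta-1}|\nabla\un|^2=\big(\tfrac{2}{1+\eta}\big)^2\io|\nabla w^{(1+\eta)/2}|^2$, one applies Sobolev on the left and Hölder on the right, $\io|f|w^{\eta}\le\|f\|_m\,\|w^{\eta}\|_{m'}$. Choosing $\eta$ so that the two powers match, $(1+\eta)\tfrac{2^*}{2}=\eta m'$, which forces $\eta=\tfrac{2^*}{2m'-2^*}\in(0,1)$ precisely because $1<m<\frac{2N}{N+2}$ (equivalently $m'>2^*$), the balance gives $\big(\io w^{q}\big)^{2/2^*-1/m'}\le C\|f\|_m$ with $q=(1+\eta)2^*/2$ and exponent $2/2^*-1/m'>0$ (valid since $m<\frac N2$); hence $w\in L^{q}$ is bounded by $\|f\|_m$. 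A final Hölder split $\io|\nabla\un|^{m^*}\le\big(\io w^{\eta-1}|\nabla\un|^2\big)^{m^*/2}\big(\io w^{\ast}\big)^{1-m^*/2}$ (using $m^*<2$) then produces the Boccardo--Gallou\"et bound $\|\un\|_{\wm}\le C_m\|f\|_m$.

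Every constant above depends only on $\alpha$, $N$, $m$, $|\Omega|$ and $S_2$, uniformly in $n$ and independently of $E$, which is exactly \eqref{20jenero}. I expect the \emph{main obstacle} to be the rigorous justification of the sign trick, namely checking $\Psi(\un)\in\w$ with $\Psi(\un)\ge0$ so that the distributional hypothesis $\diver E\ge0$ can legitimately be applied; this single step is what both uses \eqref{e} and frees the constants from any dependence on $E$. Once the $E$-free inequality is secured, the remaining Stampacchia and Boccardo--Gallou\"et computations are entirely classical.
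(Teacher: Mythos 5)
Your proposal is correct and follows essentially the same route as the paper: the core device in both is to rewrite the convection contribution as $\io E\cdot\nabla\Psi(\un)$ for a nonnegative Lipschitz primitive $\Psi$ (the paper's $H_\gamma$), apply the distributional condition $\diver E\ge 0$ to discard it, and thereby reduce to the $E$-free inequality $\io M\nabla\un\cdot\nabla(\text{test})\le\io \fn\,(\text{test})$, after which the classical Stampacchia and Boccardo--Gallou\"et estimates give \eqref{est2b} with constants independent of $E$. The only cosmetic difference is that the paper uses the single family of truncated test functions $T_k(\un)|T_k(\un)|^{2\gamma-2}$ with $\gamma=m^{**}/2^*$ and cites \cite{bg2} for the remaining computation, whereas you spell out the same computation with equivalent test functions.
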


\begin{proof} 
	Our proof is the same of \cite{bg2}, since we will see that the contribution of new term on $E$ is a negative number.
	We use ${T_k(\un)}|{T_k(\un)}|^{2\gamma -2}$ as test function in \eqref{wpn}, 
	$\gamma=\frac{m^{**}}{2^*}$; we repeat it is possible since every ${T_k(\un)}$ has exponential summability.
	Note that $2\gamma-1>0$ since $m>1$.
	Thus, we have
	\begin{align*}
	\io M(x)\nabla u_n \nabla ({T_k(\un)}|{T_k(\un)}|^{2\gamma -2})
	&= \io\frac{u_n}{1+\frac1n|u_n|}\,E(x)\cdot\nabla({T_k(\un)}|{T_k(\un)}|^{2\gamma-2}) 
	\\
	&\qquad +\io  f_n (x)\, {T_k(\un)}|{T_k(\un)}|^{2\gamma -2}.
	\end{align*}
	To study the second integral, we define the function
	$$
	H_\gamma(s)
	=\int_0^s\frac{t\,|t|^{2\gamma-2}}{1+\frac1n|t|}\,dt. 
	$$
	It is easy to check that $H_\gamma (s) \ge 0$ for all $s\in \mathbb R$. Thus, using the sign condition on $\diver E$ we have that
	\begin{align*} 
	\io\frac{u_n}{1+\frac1n|u_n|}\,& E(x)\cdot\nabla({T_k(\un)}|{T_k(\un)}|^{2\gamma-2})
	\\
	&=
	\io(2\gamma-1)
	\frac{T_k(\un)  \,|{T_k(\un)}|^{2\gamma-2}}{1+\frac1n|T_k(\un)|}\,E(x)\cdot\nabla{T_k(\un)}
	\\
	&
	=\io {{H}}_\gamma({T_k(\un)})\,E(x)\cdot\nabla{T_k(\un)}
	=\io \,E(x)\cdot\nabla[H_\gamma({T_k(\un)})]\leq0.
	\end{align*} 
	Hence, we have that
	$$
	\io M(x)\nabla u_n \nabla ({T_k(\un)}|{T_k(\un)}|^{2\gamma -2})
	\leq
	 \io  f_n (x)\, {T_k(\un)}|{T_k(\un)}|^{2\gamma -2},
	$$
	which is the starting point of \cite{bg2}, and we get the estimates
	$$
	\begin{cases}
	 \norma{{T_k(\un)}}{\wm}
	\leq C_m\,
	\norma{f}{m},\quad\hbox{if }1<m<\frac{2N}{N+2};
	\\
	 \norma{{T_k(\un)}}{\w}+\norma{{T_k(\un)}}{m^{**}}
	\leq \tilde{C_m}\,
	\norma{f},\quad\hbox{if }\frac{2N}{N+2}\leq m{<} \frac N2.
	\end{cases}
	$$
	Letting $k\to\infty$ we recover \eqref{est2b}.
	\end{proof}

With this lemma, we can pass to the limit to prove \Cref{thm:existence E in L^2}.

\begin{proof}[Proof of \Cref{thm:existence E in L^2}]
	Up to subsequences,  the sequence $\{{T_k(\un)}\}$ constructed above,
	weakly converges (in $\wm$ or  in $\w\cap\elle{m^{**}}$) and it is possible to pass to some $u$ (note that $u\in\elle{m^{**}}$).
	Recall that  $E\in (L^2)^N $. 
	In order to pass to the limit in
	$$\io\frac{u_n}{1+\frac1n|u_n|}\,E_n(x)\cdot\nabla v$$
	in \eqref{wpn} 
	we need
	$$
	1\geq\frac1{m^{**}}+\frac1N+\frac12.
	$$
	That is equivalent to $m\geq\frac{2N}{N+2}$.
	Thus we pass also to the limit in \eqref{est2b}.
\end{proof}

\begin{remark}
	Note that, once more it is possible to develop an approximate method in order to prove the existence when $E \in L^r$.
	Indeed, 
	let $E_0\in L^r$, $r>1$ and $E_n\in L^2$ converging to $E_0$ in $L^r$. 
	Define now $\un$ in the corresponding way, we can
	use the statement of \eqref{20jenero}, so that we can say that
	estimates \eqref{est2} still hold for this new sequence $\{\un\}$
	and once more we can pass to the limit, and we prove the existence if
	$$
	1\geq\frac1r+\frac1m-\frac2N
	$$
\end{remark}

We can provide further a priori estimates when $ \diver E \ge 0$
\begin{proposition}
	\label{thm:a priori diver E non-neg}
	The solutions constructed in \Cref{thm:existence E in L^2} satisfy the following additional estimates:

	\begin{enumerate}
		\item \label{it:sum u in L1}
		($L^1$ estimate) 
		If $\diver E \in L^1 (\Omega)$ then 
		we have that
		\begin{equation}
			\label{eq:uniform bound 2}
			\int_\Omega |u| \diver E \le \int_\Omega |f|. 
		\end{equation} 
		
		\item \label{it:sum u in m greater 1}
		($L^{{m}}$ estimate) If $\diver E \ge c_0 > 0$ and $m > 1$ then
		\begin{equation}
			\label{eq:uniform estimate approximation 3}
			\| u \|_{L^{{m}}} \le \frac{{{m}}}{{{m}}-1} \frac{\| f \|_{L^m} }{c_0}.
		\end{equation} 
	\end{enumerate}
\end{proposition}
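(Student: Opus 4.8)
The plan is to establish both bounds first on the approximating solutions $\un$ of \eqref{wpn} (taking $E_n=E$ and $f_n=T_n(f)$, as in \Cref{thm:existence E in L^2}) and then to pass to the limit. The common mechanism is that for any Lipschitz $\Psi\colon\mathbb R\to[0,\infty)$ with $\Psi(0)=0$ the function $\Psi(\un)$ is a nonnegative element of $\w$, so \eqref{e} gives $\io E\cdot\D\Psi(\un)=-\langle\diver E,\Psi(\un)\rangle\le0$; testing \eqref{wpn} with a function of $\un$ turns the convection term into exactly such an expression. The point is to choose the test function so that, after exploiting this sign, a \emph{nonnegative multiple} of the quantity we want to estimate ($\io|\un|^m\diver E$ or $\io|\un|\diver E$) is what survives on the coercive side.

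For part \ref{it:sum u in m greater 1} I would test \eqref{wpn} with $v=|T_k(\un)|^{m-2}T_k(\un)$, which is admissible since $T_k(\un)\in\w\cap\limi$. By ellipticity the second order term is nonnegative. Expanding the convection term by the chain rule and integrating by parts, the first order (drift) part contributes $-\tfrac1m\io|\un|^m\diver E$ while the zeroth order (potential) part contributes $+\io|\un|^m\diver E$; the decisive fact is that for $m>1$ these do \emph{not} cancel but combine into the nonnegative term $\tfrac{m-1}{m}\io|\un|^m\diver E$. (At fixed $n$ this combination is really $\io\Psi_m(\un)\,\diver E$ with $\Psi_m\ge0$ and $\Psi_m(s)\nearrow\tfrac{m-1}{m}|s|^m$ as $n\to\infty$, which is all that is needed.) Estimating the right hand side by Hölder, $\io|f_n|\,|\un|^{m-1}\le\norma fm\,\norma{\un}{m}^{\,m-1}$, and using $\diver E\ge c_0$ in the form $\io|\un|^m\diver E\ge c_0\,\norma{\un}{m}^m$, one divides by the finite quantity $\norma{\un}{m}^{\,m-1}$ (finite by the summability of \Cref{thm:existence E in L^2}) to obtain $\norma{\un}{m}\le\frac{m}{m-1}\frac{\norma fm}{c_0}$. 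Sending $k\to\infty$ and then $n\to\infty$ gives \eqref{eq:uniform estimate approximation 3}; note this identifies the exponent as $q=m$, so that $\frac{q}{q-1}=\frac{m}{m-1}$.

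For part \ref{it:sum u in L1} the formal endpoint is $m=1$, i.e. testing with (a smooth bounded odd increasing approximation $S_\delta$ of) $\sign(\un)$ and treating $\diver E$ as a Schrödinger potential. This is where the main difficulty lies: at $m=1$ the coefficient $\frac{m-1}{m}$ degenerates to $0$, which is the analytic manifestation of the fact that, for a convection in divergence form, the drift term exactly cancels the potential term (indeed $\io E\cdot\D|\un|=-\io|\un|\diver E$). A plain sign test therefore returns only the trivial identity $\io\sign(\un)\,(-\diver(M\D\un))\le\io|f_n|$ and \emph{not} \eqref{eq:uniform bound 2}. To break the cancellation I would localise the test function to the superlevel sets, testing with an approximation of $\sign(\un)\,\mathbf 1_{\{|\un|>s\}}$: the contribution at the threshold no longer cancels and yields, for a.e.\ $s>0$, the layer inequalities $s\int_{\{|\un|>s\}}\diver E\le\int_{\{|\un|>s\}}|f_n|$. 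Recombining these levels (using $\diver E\in L^1$), or alternatively reducing beforehand to the one signed case $u\ge0$ by the comparison principle of \Cref{sec:comparison principle} and then bounding the adjoint problem by duality, should deliver \eqref{eq:uniform bound 2}.

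Passing to the limit is then routine: the integrands on the left are nonnegative and $\un\to u$ a.e., so Fatou's lemma transfers both bounds to $u$, while on the right $f_n\to f$ in $L^1$ and $\norma{f_n}{m}\le\norma fm$. The step I expect to be genuinely delicate is the one just described for part \ref{it:sum u in L1}: extracting $\io|u|\diver E$ with the clean constant $1$ in spite of the exact drift/potential cancellation forced by the divergence structure of the convection term, which is precisely what makes the endpoint $m=1$ qualitatively different from the regime $m>1$ handled in part \ref{it:sum u in m greater 1}.
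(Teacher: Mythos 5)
Your argument for part \ref{it:sum u in m greater 1} is correct and is essentially the paper's own: one tests with $h(u)$ for $h(s)=|s|^{m-2}s$, arrives at the identity $\alpha\io h'(u)|\nabla u|^2+\io F(u)\diver E\le\io f h(u)$ with $F(s)=\int_0^s\tau h'(\tau)\,d\tau=\frac{m-1}{m}|s|^m$, and concludes exactly as you do (the $q$ in \eqref{eq:uniform estimate approximation 3} is indeed $m$).

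For part \ref{it:sum u in L1} your diagnosis of the difficulty is exactly right, but your repair does not close the gap --- and, notably, the paper's own proof stumbles on precisely the cancellation you point out. The paper takes $h_\varepsilon=T_\varepsilon/\varepsilon$ in \eqref{26mar} and asserts that $F_\varepsilon(s)=\int_0^s\tau h_\varepsilon'(\tau)\,d\tau\to|s|$; in fact $F_\varepsilon(s)=T_\varepsilon(s)^2/(2\varepsilon)\le\varepsilon/2\to 0$, which is the exact drift/potential cancellation you describe, so the sign test yields no information. Your layer inequalities $s\int_{\{|u|>s\}}\diver E\le\int_{\{|u|>s\}}|f|$ are correct (take $h=\tfrac1\varepsilon(T_{s+\varepsilon}-T_s)$, for which $F\to s\,\mathbf 1_{\{|\cdot|>s\}}$), but they cannot be recombined: $\io|u|\diver E=\int_0^\infty\bigl(\int_{\{|u|>s\}}\diver E\bigr)\,ds\le\int_0^\infty s^{-1}\int_{\{|u|>s\}}|f|\,ds$, and the right-hand side diverges logarithmically at $s=0$ wherever $u\neq 0$ on $\supp f$. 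The duality alternative fails as well, since the adjoint solution of $-\diver(M^T\nabla w)-E\cdot\nabla w=\diver E$ need not be bounded by $1$: for $E=3x/|x|^2$ on $B_1\subset\mathbb R^3$ it is $w=\frac34\log\frac1{|x|}$. Indeed no strategy can succeed, because \eqref{eq:uniform bound 2} with constant $1$ appears to be false: with $M=I$, $\Omega=B_1\subset\mathbb R^3$, $E=3x/|x|^2$ and $f=\mathbf 1_{B_\rho}$, the unique weak solution is $u=\frac{\rho^3}{12}(|x|^{-1}-|x|^3)$ for $|x|\ge\rho$ ($C^1$-matched to $\frac{|x|^2}{3}+a'|x|^3$ inside), giving $\io u\diver E=12\pi\int_0^1 u\,dr\sim\pi\rho^3\log\frac1\rho$ against $\io f=\frac43\pi\rho^3$, so the bound fails for small $\rho$. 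What does survive your analysis is the Marcinkiewicz-type statement $s\int_{\{|u|>s\}}\diver E\le\io|f|$ for all $s>0$; to recover \eqref{eq:uniform bound 2} itself one would have to retain the boundary-flux term $\lim_{\varepsilon\to0}\frac\alpha\varepsilon\int_{\{|u|<\varepsilon\}}|\nabla u|^2$ on the left of \eqref{26mar} (it is this term, not $\io|u|\diver E$, that absorbs $\io|f|$), or accept a constant depending on $E$.
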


{
We will later take advantage of \eqref{eq:uniform bound 2} and present several extensions. See, e.g., \Cref{thm:existence singular boundary} where we extend the result to $\diver E \in L^1_{loc}$.
}

\begin{remark}
	Notice that \eqref{eq:uniform estimate approximation 3} blows up as $m \to 1$. 
	In fact, it is known that the case $m = 1$ does not satisfy such an estimate.
\end{remark}

We prove a priori estimates under the assumption of {$\diver E \ge 0$} for bounded (or even smooth) $E$, which we now know will hold for approximations.

\begin{proof}[Proof of \Cref{thm:a priori diver E non-neg}]
	Assume first that $E \in (L^N)^N$, and $f \in L^m$ for $m \ge \frac{2N}{N+2}$. Then, we can deal with the unique solution $u \in W^{1,2}_0 (\Omega)$ that exists by \Cref{thm:Boccardo well-posedness}. Due to the construction by approximation in \Cref{thm:existence E in L^2} the estimates pass to the limit in the construction.
		Take $h \in W^{1,\infty}(\mathbb R)$ such that $h(0) = 0$. We take  $v = h(u)$ as a test function we can write
		\begin{equation*}
			\alpha \int_\Omega h'(u) |\nabla u|^2 \le \int_\Omega M(x) \nabla u \cdot \nabla h(u) =    \int_\Omega u E \cdot \nabla h(u)  + \int_\Omega f h(u).
		\end{equation*}
		We can write
		\begin{equation*}
			u \nabla h(u) = u h'(u) \nabla u = \nabla F(u)
		\end{equation*}
		where $F(s) = \int_0^s \tau\,h'(\tau)  d\tau$. Hence,
		\begin{equation*}
			\alpha \int_\Omega h'(u) |\nabla u|^2 \le  \int_\Omega  E \cdot \nabla F(u)  + \int_\Omega f h(u).
		\end{equation*}
		Now we prove both items
		\begin{itemize}
			\item \textit{{\Cref{it:sum u in L1}}.}
			{Since $\diver E \in L^1 (\Omega)$ we can 
			integrate by parts again to deduce
			\begin{equation}
				\label{26mar}
				\alpha \int_\Omega h'(u) |\nabla u|^2 +\int_\Omega  F(u) \diver E \le \int_\Omega f h(u).
			\end{equation}
			}
			Let us consider
			$h_\varepsilon (s) = T_\varepsilon(s)/\varepsilon$. Then $h_\varepsilon' \ge 0$ and $|h_\varepsilon| \le 1$ and, hence,
			in \eqref{26mar}
			\begin{equation*}
				\int_\Omega  F_\varepsilon (u) \diver E \le \int_\Omega |f|.
			\end{equation*}
			It is clear that $F_\varepsilon(s) \to |s|$ a.e.\ as $\varepsilon \to 0$. Then, 
			\begin{equation*}
				\int_\Omega |u| \diver E \le \int_\Omega |f|.
			\end{equation*}
			
			\item \textit{\Cref{it:sum u in m greater 1}. } Let us take, for $m > 1$, $h(s) = |s|^{m-1}$ then 
			\begin{equation*}
				F(s) = (m-1)\int_0^s |\tau|^{m-2} \sign (\tau) \tau d\tau = \frac{m-1}{m} s^m .
			\end{equation*}
			Hence, going back to \eqref{26mar}
			\begin{equation*}
				c_0 \frac{m-1}{m} \| u \|_{L^m}^m \le  \frac{m-1}{m} \int_\Omega |u|^m  \diver E \le \int_\Omega f |u|^{m-1} \le \| f \|_{L^m} \| u \|_{L^m}^{m-1}.
			\end{equation*}
			Hence, we simplify
			\begin{equation*}
				\| u \|_{L^m} \le \frac{m}{m-1} \frac{\| f \|_{L^m} }{c_0} . \qedhere
			\end{equation*}
		\end{itemize}
	\end{proof}

\section{Comparison principle and {uniqueness}}

\label{sec:comparison principle}

To show uniqueness of solutions we prove a weak maximum principle.

\begin{theorem}%
	\label{thm:uniqueness}
	Let $f \in L^{\frac{2N}{N+2}} (\Omega)$ and 
	\eqref{e}.  
	Then, if $u \in W^{1,2}_0(\Omega)$ is a solution of \eqref{eq:weak solution} then 
	\begin{equation*}
		\| \nabla u^+ \|_2 \le \frac{1}{\alpha S_2} \| f^+ \|_{\frac{2N}{N+2}}  .
	\end{equation*} 
	Hence, there is, at most, one solution of \eqref{eq:weak solution} {in $W_0^{1,2} (\Omega)$}. Furthermore, if $f \ge 0$ then $u \ge 0$.
\end{theorem}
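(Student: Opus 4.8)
The natural idea is to test the weak formulation \eqref{eq:weak solution} with the positive part $u^+=\max\{u,0\}\in\w$, but since $E$ only lies in $(L^2)^N$ the drift term $\io u^+E\cdot\nabla u^+$ need not even be integrable (the triple Hölder product of $u^+\in L^{2^*}$, $E\in L^2$ and $\nabla u^+\in L^2$ fails for $N\ge 3$). For this reason I would work with the bounded truncations $T_k(u^+)\in\w\cap\limi$ and let $k\to\infty$ only at the end. The first step is to verify that $T_k(u^+)$ is an admissible test function; the only delicate term is the drift one, and the key algebraic identity is
\[
u\,\nabla T_k(u^+)=u^+\,\nabla T_k(u^+)=\nabla\Big(\tfrac12 T_k(u^+)^2\Big),
\]
which belongs to $(L^2)^N$ because $T_k(u^+)$ is bounded and $\nabla T_k(u^+)\in(L^2)^N$. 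Hence $\io u\,E\cdot\nabla T_k(u^+)=\io E\cdot\nabla(\tfrac12 T_k(u^+)^2)$ is well defined for $E\in(L^2)^N$, and admissibility then follows by a standard density argument (approximating $T_k(u^+)$ in $\w$ by $W^{1,\infty}_0$ functions, the drift term passing to the limit precisely because $u\,\nabla T_k(u^+)\in(L^2)^N$).

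Next, with $v=T_k(u^+)$ and using $\nabla T_k(u^+)=\chi_{\{0<u<k\}}\nabla u$, ellipticity gives $\io M(x)\nabla u\cdot\nabla T_k(u^+)\ge\alpha\io|\nabla T_k(u^+)|^2$. The crucial point is the sign of the drift: since $\tfrac12 T_k(u^+)^2$ is a non-negative element of $\w$, the distributional hypothesis $\diver E\ge0$ in \eqref{e} yields
\[
\io u\,E\cdot\nabla T_k(u^+)=\io E\cdot\nabla\Big(\tfrac12 T_k(u^+)^2\Big)\le 0 .
\]
Discarding this term and using $\io f\,T_k(u^+)\le\io f^+T_k(u^+)$ (as $T_k(u^+)\ge0$), I would estimate the right-hand side by Hölder and the Sobolev inequality \eqref{eq:Sobolev embedding}:
\[
\io f^+T_k(u^+)\le \|f^+\|_{\frac{2N}{N+2}}\,\|T_k(u^+)\|_{2^*}\le \tfrac{1}{S_2}\,\|f^+\|_{\frac{2N}{N+2}}\,\|\nabla T_k(u^+)\|_2 .
\]
Combining the three displays gives $\alpha\|\nabla T_k(u^+)\|_2^2\le S_2^{-1}\|f^+\|_{\frac{2N}{N+2}}\|\nabla T_k(u^+)\|_2$, hence $\|\nabla T_k(u^+)\|_2\le(\alpha S_2)^{-1}\|f^+\|_{\frac{2N}{N+2}}$ uniformly in $k$. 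Since $\nabla T_k(u^+)\to\nabla u^+$ a.e.\ as $k\to\infty$, Fatou's lemma delivers the stated bound.

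The remaining assertions are then immediate by linearity. For uniqueness, if $u_1,u_2$ both solve \eqref{eq:weak solution} with the same $f$, then $w=u_1-u_2\in\w$ solves it with datum $0$; the estimate gives $\|\nabla w^+\|_2=0$, so $w^+\equiv0$ by the Poincaré inequality, i.e.\ $u_1\le u_2$, and exchanging the roles of $u_1,u_2$ yields $u_1=u_2$. For the sign, if $f\ge0$ then $-u$ solves \eqref{eq:weak solution} with datum $-f$, and since $(-f)^+=f^-=0$ the estimate forces $(-u)^+\equiv0$, that is $u\ge0$.

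I expect the main obstacle to be the drift term rather than the algebra. Because $E$ is only in $(L^2)^N$, neither $u^+$ nor $(u^+)^2$ can be inserted as test functions directly, and the whole purpose of the truncation $T_k(u^+)$ is to meet two requirements at once: on one hand to keep $u\,\nabla T_k(u^+)=\nabla(\tfrac12 T_k(u^+)^2)$ in $(L^2)^N$, so that the pairing against $E$ is meaningful; on the other hand to produce a legitimate non-negative element of $\w$ on which the distributional inequality $\diver E\ge0$ from \eqref{e} can be applied with the correct sign. Reconciling these two needs, together with the careful density justification that $T_k(u^+)$ is admissible in \eqref{eq:weak solution}, is where the real care lies; once this is in place, the uniform bound and the passage $k\to\infty$ are routine.
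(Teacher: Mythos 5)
Your argument follows the same route as the paper's: test with the positive part, rewrite the drift term as $E\cdot\nabla$ of the non-negative function $\tfrac12(\cdot)^2$, discard it using the distributional inequality $\diver E\ge 0$ (the paper isolates exactly this step as \Cref{lem:divergence non-negative}), and close with H\"older and Sobolev; uniqueness and the sign of $u$ then follow by linearity exactly as you write. The only real difference is that the paper tests directly with $v=u^+$ and applies the lemma to $\tfrac12 (u^+)^2$, whereas you insert the truncation $T_k(u^+)$ and remove it at the end by Fatou. Your version is in fact the more careful of the two: with $E$ only in $(L^2(\Omega))^N$ one has $u^+\nabla u^+\in L^{N/(N-1)}$ only, so the pairing $\io E\cdot\nabla\tfrac{(u^+)^2}{2}$ appearing in the paper's one-line proof is not obviously defined, while $\nabla\bigl(\tfrac12 T_k(u^+)^2\bigr)=T_k(u^+)\nabla T_k(u^+)$ genuinely lies in $(L^2(\Omega))^N$. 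The one step you should not wave away is the admissibility of $T_k(u^+)$: since \eqref{eq:weak solution} is stated for $v\in W^{1,\infty}_0(\Omega)$, for a generic approximating sequence $v_j\to T_k(u^+)$ in $\w$ the drift term is $\io uE\cdot\nabla v_j$, which involves $u\nabla v_j$ rather than $u\nabla T_k(u^+)$; the fact that the \emph{limit} object is in $(L^2)^N$ does not by itself yield convergence, because $uE$ is only in $L^{N/(N-1)}$. One must either choose $v_j$ so that $u\nabla v_j\to u\nabla T_k(u^+)$ in $L^2$, or work with the solutions constructed by approximation (for which $h(u_n)$ is an admissible test function at the approximate level). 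The paper is silent on this point as well, so it is a shared delicacy of the argument rather than a defect specific to your proposal; everything else in your write-up is correct.
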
	

We first prove the following lemma
\begin{lemma}
	\label{lem:divergence non-negative}
	Let $m, {r} > 1$, $E \in L^{r'} (\Omega)$
	 with $0 \le \diver E \in \mathcal D'(\Omega)$. Then, we have that
	\begin{equation}
		-\int_\Omega E \nabla v \ge 0 \qquad \forall\; 0 \le v \in W^{1,r}_0 (\Omega).
	\end{equation}	
\end{lemma}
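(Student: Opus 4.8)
The plan is to establish this purely by approximation, upgrading the distributional sign condition from smooth test functions to the full nonnegative cone of $\wr$; the parameter $m$ plays no role in the statement and I will not use it. By the very definition of $\diver E \ge 0$ in $\mathcal D'(\Omega)$ we have $-\io E \cdot \nabla \phi \ge 0$ for every $\phi \in C_c^\infty(\Omega)$ with $\phi \ge 0$. Because $E \in \elle{r'}$ and $\nabla v \in \elle{r}$ whenever $v \in \wr$, Hölder's inequality guarantees that the linear functional $\Phi(v) := -\io E \cdot \nabla v$ is well defined and continuous on $\wr$, with $|\Phi(v)| \le \|E\|_{\elle{r'}}\, \|\nabla v\|_{\elle{r}}$. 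Hence, once I produce, for a given $0 \le v \in \wr$, a sequence $0 \le \phi_k \in C_c^\infty(\Omega)$ with $\phi_k \to v$ in $\wr$, I may pass to the limit in $\Phi(\phi_k) \ge 0$ to obtain $\Phi(v) \ge 0$, which is exactly the assertion.

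The substance of the proof is therefore the construction of such nonnegative smooth approximations. First I would truncate, replacing $v$ by $T_j(v)$, which is bounded, nonnegative, and converges to $v$ in $\wr$ as $j \to \infty$. Next I would detach the approximation from the boundary, either by subtracting a small constant and taking positive parts, $(T_j(v) - \tfrac1j)^+$, or by multiplying against a nonnegative cutoff $\eta \in C_c^\infty(\Omega)$ with $0 \le \eta \le 1$; both operations keep the function nonnegative and, for a suitable family of shifts or cutoffs, recover $v$ in the $\wr$-norm in the limit. Finally I would mollify the resulting compactly supported nonnegative function by a nonnegative kernel $\rho_\varepsilon$, choosing $\varepsilon$ smaller than the distance of its support to $\partial\Omega$: convolution against $\rho_\varepsilon \ge 0$ preserves nonnegativity and yields a function in $C_c^\infty(\Omega)$ converging in $W^{1,r}$.

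The hard part is precisely this bookkeeping near the boundary. A generic mollification of $v$ extended by zero need not be supported inside $\Omega$, and the obvious remedy of taking positive parts of an arbitrary smooth approximating sequence destroys smoothness; the delicate point is guaranteeing that the intermediate function has $\supp$ compactly contained in $\Omega$ while staying nonnegative, which is where the shift-and-positive-part (or cutoff) step must be controlled in the $\wr$-norm. The reason the scheme closes is that every operation I invoke — truncation, the shift-and-positive-part or cutoff step, and convolution with a nonnegative mollifier — maps the nonnegative cone of $\wr$ into itself while converging in norm, so the positivity of $v$ is never lost along the approximation. With $\{\phi_k\}$ in hand, the continuity of $\Phi$ finishes the argument, and I emphasize that no information on $E$ beyond the distributional hypothesis $\diver E \ge 0$ is used.
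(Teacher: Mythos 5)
Your argument is correct and takes essentially the same route as the paper: both reduce the claim to the distributional definition tested against nonnegative $\mathcal C_c^\infty(\Omega)$ functions, invoke the density of the nonnegative smooth compactly supported functions in the nonnegative cone of $W^{1,r}_0(\Omega)$, and pass to the limit using the continuity of $v \mapsto -\int_\Omega E\cdot\nabla v$ guaranteed by $E \in L^{r'}(\Omega)$ and H\"older's inequality. The only difference is that the paper asserts the existence of the nonnegative approximating sequence without proof, whereas you sketch its construction; a slightly cleaner way to close that step is to take the positive parts $\varphi_k^+$ of an arbitrary smooth approximating sequence (these are nonnegative, Lipschitz, compactly supported, and still converge in $W^{1,r}$) and then mollify.
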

\begin{proof}
	By definition of having a sign in distributional sense, for $0 \le \varphi  \in \mathcal C_c^\infty(\Omega) $, we have that
	\begin{equation*}
		- \int_\Omega E \nabla \varphi = \langle \diver E , \varphi \rangle \ge 0. 
	\end{equation*}
	For $0 \le v \in W^{1,r}_0 (\Omega)$, we can find a sequence $0 \le \varphi_n \in \mathcal C_c^\infty(\Omega)$, {such that} $\varphi_n \to v $ in $W^{1,r}_0 (\Omega)$. In particular, $\nabla \varphi \to \nabla v$ in $L^r (\Omega)$. We can pass to the limit in the estimate.
\end{proof}

\begin{proof}[Proof of \Cref{thm:uniqueness}]
	Let $u$ be a solution. {Take $\rho_n$ a family of non-negative mollifiers,} and use $v{_n} = {\rho_n*}u^+$ as a test function. 
	{Passing to the limit in $n$ and}
	applying the  
	previous 
	lemma
	\begin{equation*}
		\alpha \int_\Omega |\nabla u^+|^2 \le \int_\Omega E \nabla \frac{u_+^2}{2} + \int_\Omega f u^+ \le \| f \|_{(2^*)'} \| u^+ \|_{2^*} \le \frac{1}{S} \| f \| _{(2^*)'} \| \nabla u^+ \|_2. 
	\end{equation*}
	We recover the estimate.
\end{proof}

{
\begin{lemma}
	\label{rem:approximation E} 
	Let $E \in L^r (\Omega)^N$ for $r > 1$ with $\diver E \ge 0$ in $D'(\Omega)$. Then, there exists a sequence $E_n \in W^{1,\infty} (\Omega)$ with $\diver E_n \ge 0$ such that $E \to E$ in $L^r(\Omega)^N$.
\end{lemma}
\begin{proof}
	We use a similar decomposition to \cite[Theorem 1.5]{RogerTemam1979} (done there for $r = 2$).
	First, define 
	$$
	\begin{dcases}
		-\Delta p^{(1)} = \diver E & \text{in }\Omega,\\
		p^{(1)} = 0 & \text{on }\partial \Omega.
	\end{dcases}
	$$
	By well-known results we get a unique solution $p^{(1)} \in W^{1,r}_0 (\Omega)$. 
	Take $E^{(1)} = \nabla p^{(1)}$.
	Lastly, take $E^{(2)} = E - E^{(1)} \in L^r (\Omega)$.
	Notice that $\diver E^{(2)} = 0$.
	Due to \cite{Kato2000}, $E^{(2)}$ admits a divergence-free extension to $L^r(\Rd)$, which we denote $\widetilde E^{(2)}$. We can take a family of mollifiers $\rho_n$, and $E_n^{(2)} = \widetilde E^{(2)} * \rho_n \in W^{1,\infty} (\Omega)$.
	Now let $0 \le g^{(1)} = \diver E^{(1)} \in W^{-1,r'} (\Omega)$. Let $g_n^{(1)} \ge 0$ be a sequence of $C_c^\infty (\Omega)$ functions with $g_n^{(1)} \to g^{(1)}$ in $L^{r'} (\Omega)$. Take $p_n^{(1)}$ the unique solution to
	$$
	 \begin{dcases}
		-\Delta p^{(1)}_n = g_n^{(1)} & \text{in }\Omega,\\
		p^{(1)}_n = 0 & \text{in }\partial \Omega.\\
	 \end{dcases}
	$$
	Finally, define $E_n^{(1)} = \nabla p_n^{(1)} \in W^{1,\infty} (\Omega).$
	It is now easy to see that $E^{(i)}_n \to E^{(i)}$ in $L^r (\Omega)^N$ for $i=1,2$, and the proof is complete.
\end{proof}
}

\begin{theorem}
	\label{thm:existence and uniqueness}
	Let $f\in L^m (\Omega)$ and $E \in L^r (\Omega)$ such that $0 \le \diver E \in \mathcal D'(\Omega)$ and
	\begin{equation}
		\label{eq:condition existence weak minus}
		\begin{dcases}
					\frac{1}{ \min \{2^* , m^{**}\} } + \frac{1}{r} \le  1  & \text{if } \tfrac {{2}N}{N+2} \le m \le \tfrac N 2 \\
					\frac{1}{ 2^* } + \frac{1}{r} \le  1  & \text{otherwise} \\
		\end{dcases}
	\end{equation}
	Then, taking $q = \min \{ 2 , m^* \}$ 
	{(using formally $m^* = \infty$ for $m \ge N$)} there exists a solution of 
	\begin{equation}
		\label{eq:weak minus}
		u \in W^{1,q  }_0 (\Omega) \quad \text{such that} \quad  \int_\Omega M(x) \nabla u \nabla v = \int_\Omega u E \nabla v + \int_\Omega f v, \quad \forall v \in W^{1,q'}_0 (\Omega).
	\end{equation}
	Furthermore, 
	if $m \ge \frac{2N}{N+2}$ and $r \ge N$
	it is the unique solution of \eqref{eq:weak solution}.
\end{theorem}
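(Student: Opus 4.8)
The plan is to obtain existence by regularizing the drift $E$, solving the regularized problems with the $E$-independent estimates already at hand, and then passing to the limit; uniqueness will afterwards be read off from the comparison principle of \Cref{thm:uniqueness}. For the existence part I would regularize $E$ as in \Cref{rem:approximation E}: set $E_n = \rho_n * E$ for positive mollifiers $\rho_n$, so that $E_n \in W^{1,\infty}(\Omega) \subset (\elle2)^N$, $\diver E_n = \rho_n * \diver E \ge 0$ in $\mathcal D'(\Omega)$, and $E_n \to E$ in $(\elle r)^N$. For each fixed $n$, \Cref{thm:existence E in L^2} applies (with $f \in \elle m$ in the admissible range) and produces a solution $u_n \in \wq$, $q = \min\{2, m^*\}$, of the weak problem with $E$ replaced by $E_n$, satisfying \eqref{est2}. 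The decisive point is \eqref{20jenero}: these bounds do not depend on the drift, hence they are uniform in $n$. Thus $\{u_n\}$ is bounded in $\wq$ and in $\elle{m^{**}}$, and up to a subsequence $u_n \rightharpoonup u$ in $\wq$; by Rellich's theorem together with the uniform $\elle{m^{**}}$ bound, $u_n \rightharpoonup u$ also in $\elle{r'}$, where $r' = r/(r-1)$. Indeed, condition \eqref{eq:condition existence weak minus} is exactly what forces $r' \le \min\{2^*, m^{**}\}$, so that $u$ and all the $u_n$ belong to $\elle{r'}$.

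For passing to the limit, the diffusion term $\io M(x)\nabla u_n \nabla v$ converges by weak convergence of the gradients, and $\io f v$ is immediate. The only delicate term is the convective one, which I would split as
\[
u_n E_n \cdot \nabla v = u_n (E_n - E)\cdot \nabla v + u_n\, E \cdot \nabla v.
\]
Since $\nabla v \in \limi$, the first summand is bounded by $\|u_n\|_{r'}\,\|E_n - E\|_{r}\,\|\nabla v\|_{\infty} \to 0$, using the uniform $\elle{r'}$ bound and $E_n \to E$ in $(\elle r)^N$. The second converges to $\io u\, E \cdot \nabla v$ because $u_n \rightharpoonup u$ in $\elle{r'}$ while $E\cdot \nabla v \in \elle{r}$ (as $E \in \elle r$ and $\nabla v$ is bounded). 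Hence $u$ satisfies \eqref{eq:weak minus}.

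For uniqueness assume now $m \ge \frac{2N}{N+2}$ and $r \ge N$. Then $q = \min\{2, m^*\} = 2$, so every solution of \eqref{eq:weak minus} lies in $\w$; moreover $E \in \elle r \subset \elle N \subset (\elle2)^N$ on the bounded set $\Omega$, with $\diver E \ge 0$, and $f \in \elle m \subset L^{\frac{2N}{N+2}}(\Omega)$. Because $E \in \elle N$, for $u, v \in \w$ one has $u \in \elle{2^*}$, $E \in \elle N$, $\nabla v \in \elle 2$ with $\frac{1}{2^*} + \frac1N + \frac12 = 1$, so $u\,E\cdot\nabla v \in L^1(\Omega)$ and the identity in \eqref{eq:weak minus} extends by density from $W^{1,\infty}_0(\Omega)$ to every $v \in \w$; in particular any such solution solves \eqref{eq:weak solution}. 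Given two solutions $u_1, u_2$, the difference $w = u_1 - u_2 \in \w$ solves \eqref{eq:weak solution} with datum $0$; applying \Cref{thm:uniqueness} to $w$ and to $-w$ yields $\|\nabla w^+\|_2 = \|\nabla w^-\|_2 = 0$, whence $w = 0$.

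I expect the main obstacle to be the convective term together with the exponent bookkeeping: one must certify, uniformly in $n$, that the Sobolev regularity of the approximants places them in $\elle{r'}$, which is exactly the content of \eqref{eq:condition existence weak minus}, and then exploit the asymmetry between the \emph{strong} $(\elle r)^N$ convergence of $E_n$ and the merely \emph{weak} $\elle{r'}$ convergence of $u_n$ in order to pass to the limit. Everything else reduces to results already established in the excerpt.
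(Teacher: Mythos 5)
Your proposal is correct and follows essentially the same route as the paper's proof: mollify $E$ to preserve $\diver E \ge 0$, use the drift-independent a priori estimates to get bounds uniform in $n$, pass to the limit in the convection term by pairing the strong $(\elle r)^N$ convergence of $E_n$ with the weak $\elle{r'}$ convergence of $u_n$ (guaranteed by \eqref{eq:condition existence weak minus}), and obtain uniqueness from \Cref{thm:uniqueness} after extending the test class to $\w$ by density. The only cosmetic differences are that the paper additionally truncates $f$ (inessential here since $f\in\elle m$ with $m>1$) and states the product convergence $u_kE_k\rightharpoonup uE$ in $L^1$ without the explicit splitting you spell out.
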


\begin{proof}
	Let $f_k = T_k (f)$ where $T_k$ is the cut-off function. We consider $E_k$ constructed in 
		\Cref{rem:approximation E}.
	\\
	By \Cref{thm:a priori diver E non-neg} there exists a unique weak $u_k$ solution of \eqref{eq:weak solution}. Since the $\cdot^*$ operation is monotone, then $q^* = \min \{ 2^*, m^{**}  \}$.  The sequence $u_k$ is uniformly bounded in $ W_0^{1,q} (\Omega)$. Therefore, by the Sobolev embedding theorem, it is uniformly bounded on $L^{ q^* } (\Omega)$.
	\\	
	Up to a subsequence, there exists $u \in W_0^{1,q} (\Omega)$ such that
	\begin{align*}
		\nabla u_k \rightharpoonup  \nabla u & \qquad \text{in }  L^{q} (\Omega) \\
		u_k \rightharpoonup u & \qquad \text{in }  L^{q^*} (\Omega).
	\end{align*}
	Since $M \in L^\infty (\Omega)^{N\times N}$, $E_k \to E \in L^r(\Omega)^N$ strongly  and \eqref{eq:condition existence weak minus} we have that
	\begin{align*}
		M(x) \nabla u_k \rightharpoonup M(x)  \nabla u & \qquad \text{in }  L^{q} (\Omega) \\
		u_k E_k \rightharpoonup u E & \qquad \text{in }  L^1 (\Omega).
	\end{align*}
	Therefore, we can pass to the limit in the weak formulation for $v \in W^{1,\infty}_0 (\Omega)$.
	If $m \ge \frac{2N}{N+2}$ and $r \ge N$, then $u E \in L^{2} (\Omega)$, and it is a solution of \eqref{eq:weak solution} by approximation.
\end{proof}

\section{Convection with singularity at one point}
\label{sec:convection singular one point}

With the approach developed in this paper we are able to study the special situation
\begin{equation} 
	\label{eq:E depending on A}
	E = A \frac x {|x|^2} \qquad \text{ where } A > 0
\end{equation} 
which is somehow in the limit of theory since it is not in $L^N(\Omega)$, but it is in $L^{r} (\Omega)$ for $r \in [1,N)$.
In \cite{Boccardo+Orsina2019} the authors examined the framework of drifts such that 
\begin{equation} 
	\label{eq:|E|le|A|/|x|}
	|E|\le\frac{|A|}{|x|},
\end{equation}
The authors show existence of solutions $u$ under \eqref{eq:|E|le|A|/|x|}, where the summability is reduced as $|A|$ is increased. They prove 
\begin{theorem}[\cite{Boccardo+Orsina2019}]
	Let $f \in  L^m (\Omega)$ and $|E| \le |A| / |x|$. Then, there exists a solution $u$ the solution of \eqref{eq:PDE} and
	\begin{enumerate}
		\item If $|A| < \frac{\alpha ( N - 2m )}{m}$ and $m \in [ \frac{2N}{N+2}, \frac N 2)$ then $u \in W_0^{1,2} (\Omega) \cap L^{m^{**}} (\Omega)$.
		\item If $|A| < \frac{\alpha ( N - 2m )}{m}$ and $m \in (1, \frac{2N}{N+2} )$ then $u \in W_0^{1,m^*} (\Omega)$.
		\item If $|A| < \alpha (N-2)$ and $m = 1$ then $\nabla u \in (M^{\frac{N}{N-1}} (\Omega))^N$ and $u \in W_0^{1,q} (\Omega)$, for every $ q < \frac {N}{N-1}$.
	\end{enumerate}
\end{theorem}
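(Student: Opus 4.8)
The plan is to run the same approximation-plus-uniform-estimate scheme used twice above, with Hardy's inequality as the single device that turns the singular drift into an absorbable term; since only $|E| \le |A|/|x|$ enters, the argument is blind to the sign of $A$, which is precisely the limitation the rest of this paper removes. First I would regularize: replace $E$ by bounded fields $E_n$ still satisfying $|E_n| \le |A|/|x|$ (e.g. $E_n = E/(1+\frac1n|E|)$) and $f$ by $f_n = T_n(f)$, and invoke the Schauder-based existence for the regularized problems to obtain $u_n \in \w$. All the content is in $n$-uniform estimates; the passage to the limit is the routine weak-convergence argument already carried out in \Cref{thm:existence E in L^2} and \Cref{thm:existence and uniqueness}.

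The heart of the matter is a single absorption inequality. Set $\gamma = \frac{m^{**}}{2^*} = \frac{m(N-2)}{2(N-2m)}$ and $w = |u_n|^\gamma$, and test the regularized equation with $v = |u_n|^{2\gamma-2}u_n$ (truncated at height $k$ to be admissible, exactly as in the approximation lemma above; note $2\gamma-1 = \frac{N(m-1)}{N-2m} > 0$ since $m>1$). Ellipticity of $M$ bounds the diffusion term below by $\frac{\alpha(2\gamma-1)}{\gamma^2}\io|\nabla w|^2$. Writing $|u_n|^{2\gamma-2}u_n\nabla u_n = \frac{1}{2\gamma}\nabla(w^2)$ turns the drift term into $\frac{2\gamma-1}{\gamma}\io w\,E\cdot\nabla w$, and after $|E| \le |A|/|x|$, Cauchy--Schwarz, and Hardy's inequality $\frac{N-2}{2}\big(\io \frac{w^2}{|x|^2}\big)^{1/2} \le \big(\io|\nabla w|^2\big)^{1/2}$ this is at most $\frac{2(2\gamma-1)|A|}{\gamma(N-2)}\io|\nabla w|^2$. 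Comparing the two coefficients, the drift is absorbed exactly when $\frac{\alpha}{\gamma} > \frac{2|A|}{N-2}$, i.e. $|A| < \frac{\alpha(N-2)}{2\gamma} = \frac{\alpha(N-2m)}{m}$ --- the stated threshold. The remaining term $\io f_n v$ is closed by Hölder and Sobolev using $(2\gamma-1)m' = m^{**}$ and $w \in L^{2^*}$, yielding a uniform bound on $\io|\nabla w|^2$, hence $u_n$ bounded in $L^{m^{**}}(\Omega)$.

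I would then split according to $m$. For $m \in [\frac{2N}{N+2}, \frac N2)$ one has $\gamma \ge 1$, so the case $\gamma = 1$ ($v = u_n$) of the above already gives a uniform energy bound; since $\frac{\alpha(N-2m)}{m}$ decreases in $m$, the hypothesis forces $|A| < \frac{\alpha(N-2)}{2}$ as well, and combining the energy bound with the $L^{m^{**}}$ bound gives $u \in \w\cap L^{m^{**}}(\Omega)$ in the limit. For $m \in (1, \frac{2N}{N+2})$ one has $\gamma < 1$ and $u_n$ is only sub-energy; here I keep the truncated test function $T_k(u_n)|T_k(u_n)|^{2\gamma-2}$, extract $n$-uniform control of $\io|\nabla T_k(u_n)|^2|u_n|^{2\gamma-2}$ together with the decay of $|\{|u_n|>k\}|$, and run the Boccardo--Gallouët level-set summation to upgrade this into a uniform bound of $\nabla u_n$ in $L^{m^*}$, giving $u \in \wm$. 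For $m=1$ the exponent degenerates to $\gamma = \frac12$ (so $2\gamma-1 = 0$ and the power test function collapses); I would instead test with $T_k(u_n)$ and use the Stampacchia--Marcinkiewicz machinery to obtain $\nabla u_n$ bounded in $M^{\frac{N}{N-1}}(\Omega)$ and hence $u \in W_0^{1,q}(\Omega)$ for every $q < \frac{N}{N-1}$, the threshold $|A| < \alpha(N-2)$ being exactly the limit $\gamma \to \frac12$ of $\frac{\alpha(N-2)}{2\gamma}$.

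The main obstacle is the sub-energy regime $m \le \frac{2N}{N+2}$. There $u$ leaves the energy space, so Hardy must be applied to the auxiliary function $|u_n|^\gamma$ (or to truncations) rather than to $u_n$ itself, and the uniform level-set energy bounds have to be combined with sharp estimates on the measure of the super-level sets in order to land in the correct $W_0^{1,m^*}$ or Marcinkiewicz space. By contrast, the energy case $m \ge \frac{2N}{N+2}$ collapses to the one absorption inequality once Hardy's inequality is available, and the sign-blindness of that inequality is exactly what prevents this method from distinguishing $A > 0$ from $A < 0$.
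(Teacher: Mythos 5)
Your proposal is correct and follows exactly the route the paper attributes to the cited reference: regularization, the power test function $|u_n|^{2\gamma-2}u_n$ with $\gamma=\tfrac{m^{**}}{2^*}$ (the same choice used in the paper's approximation lemma), absorption of the drift via Hardy's inequality with the sharp constant $\tfrac{N-2}{2}$ (note the paper's displayed inequality \eqref{eq:Hardy} has a typographical $\tfrac{N-2}{N}$, but your constant is the one that reproduces the stated threshold $|A|<\tfrac{\alpha(N-2m)}{m}$), and Boccardo--Gallou\"et/Marcinkiewicz arguments in the sub-energy and $L^1$ regimes. The paper itself only cites this theorem from \cite{Boccardo+Orsina2019} without reproving it, so there is nothing further to compare; your reconstruction, including the verification that the hypothesis forces $|A|<\tfrac{\alpha(N-2)}{2}$ in the energy range, is sound.
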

Above, $M^{\frac{N}{N-1}}$ denotes the Marcinkiewicz space (see \cite{Boccardo+Orsina2019} for the definition and some properties).  
The argument in \cite{Boccardo+Orsina2019} is based on Hardy's inequality
\begin{equation}
	\label{eq:Hardy}
	\left( \frac{N-2}N \right)^2 \int_{\mathbb R^N} \frac{|u|^2}{|x|^2 } \le \int_{\mathbb R^N} |\nabla u|^2 .
\end{equation}
We are able to extend this result to distinguish depending on the sign of $A$. Our result is the following
\begin{theorem}
	\label{prop:convection at one point}
	Let
	 $f \in L^m (\Omega)$ for some $m > 1$ and \eqref{eq:E depending on A}. 
	Then, there exists a solution $u_A$ of \eqref{eq:weak minus}, and it satisfies the estimates in \Cref{thm:a priori diver E non-neg}. Furthermore, $u_A \to 0$ as $A \to \infty$ in the sense that
		\begin{equation*}
		\int_\Omega \frac{|u_A(x)|}{|x|^2} \le \frac{1}{A (N-2)} \int_\Omega |f|.
	\end{equation*}
\end{theorem}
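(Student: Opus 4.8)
The strategy is to view $E = A\,x/|x|^2$ as a particular instance of the abstract theory developed above: once I verify that $E$ meets the hypotheses of \Cref{thm:existence and uniqueness} and that $\diver E$ is a nonnegative $L^1$ function, the announced bound will be nothing more than the $L^1$ estimate \eqref{eq:uniform bound 2} read with the explicit expression of $\diver E$.

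First I would compute the divergence of $E$. Differentiating away from the origin gives
\begin{equation*}
	\diver E = A\left( \frac{N}{|x|^2} - \frac{2|x|^2}{|x|^4} \right) = A\,\frac{N-2}{|x|^2},
\end{equation*}
which is nonnegative (since $A>0$ and $N\ge 3$) and, crucially, belongs to $L^1(\Omega)$ because $\io |x|^{-2}<\infty$ for $N\ge 3$. The delicate point, and the step I expect to be the main obstacle, is to check that this identity holds in $\mathcal D'(\Omega)$, i.e.\ that no Dirac mass is produced at the origin. To settle this I would integrate by parts on $\Omega\setminus B_\varepsilon(0)$ against a test function $\varphi$ and estimate the boundary contribution on $\partial B_\varepsilon$: since $|E|\le A/\varepsilon$ there while $|\partial B_\varepsilon|\sim \varepsilon^{N-1}$, this term is $O(\varepsilon^{N-2})\to 0$ as $\varepsilon\to0$ whenever $N\ge 3$. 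Hence $\diver E = A(N-2)|x|^{-2}\ge 0$ in $\mathcal D'(\Omega)$, with no atom at the origin.

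Next I would establish existence. Since $|E|=A/|x|$, one has $E\in (L^r(\Omega))^N$ precisely for $r<N$. Because $m>1$, I can pick $r<N$ close enough to $N$ that condition \eqref{eq:condition existence weak minus} holds; indeed, letting $1/r\to 1/N$ the left-hand side tends either to $\tfrac12$ (when $\min\{2^*,m^{**}\}=2^*$) or to $\tfrac1m-\tfrac1N<1$ (when the minimum is $m^{**}$, using $m>1$), so the inequality is satisfied strictly in the limit and therefore for some $r<N$. \Cref{thm:existence and uniqueness} then yields a solution $u_A$ of \eqref{eq:weak minus}, constructed by mollifying $E$ in a way that preserves the sign of the divergence, so that the a priori estimates of \Cref{thm:a priori diver E non-neg} pass to the limit and hold for $u_A$.

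Finally, the announced estimate is immediate. Applying \eqref{eq:uniform bound 2}, which is legitimate since $\diver E\in L^1(\Omega)$, and substituting $\diver E = A(N-2)|x|^{-2}$ gives
\begin{equation*}
	A(N-2)\io \frac{|u_A|}{|x|^2} = \io |u_A|\,\diver E \le \io |f|,
\end{equation*}
which is the claim after dividing by $A(N-2)$. As a byproduct, since $|x|\le R$ on the bounded set $\Omega$, one obtains $\io |u_A| \le R^2 \io |u_A|/|x|^2 \le \frac{R^2}{A(N-2)}\io|f| \to 0$, recovering $u_A\to0$ in $L^1(\Omega)$ as $A\to+\infty$.
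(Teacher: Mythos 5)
Your proof is correct and follows essentially the same route as the paper: compute $\diver E = A(N-2)|x|^{-2}$, observe that it is non-negative and in $L^1(\Omega)$ for $N\ge 3$, invoke the abstract existence theory, and read off the claimed bound from the $L^1$ estimate \eqref{eq:uniform bound 2}. The only differences are minor: you route existence through \Cref{thm:existence and uniqueness} with $E\in L^r$, $r<N$, where the paper simply notes $|E|\in L^2(\Omega)$ and cites \Cref{thm:existence E in L^2}; and your explicit check that no Dirac mass arises at the origin (via the $O(\varepsilon^{N-2})$ boundary term) is a point the paper leaves implicit and is a worthwhile addition.
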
 
We point out that,  if $m > \frac{2N}{N+2}$, we have furthermore $u{_A}E \in L^2(\Omega)$. 
\begin{proof} 
	Since $N \ge 3$ we know that $|E| \in L^2 (\Omega)$ and that 
	\begin{equation}
		\label{eq:E singular 0 divergence}
		\diver E = r^{1-N} \frac{\partial}{\partial r} (  r^{N-1} A r^{-1}  ) =\frac{ A(N-2)}{|x|^2} 
	\end{equation}
	is non-negative, and it is in $L^1 (\Omega)$. Then, we have satisfied the existence theory of \Cref{thm:existence E in L^2}. Due to \Cref{thm:a priori diver E non-neg} and \eqref{eq:E singular 0 divergence} the estimate follows.
\end{proof}

\section{Convection with singularity on the boundary}

\label{sec:convection singular boundary}

The aim of this section is to understand the case where $E$ is regular inside $\Omega$ but blows up towards $\partial \Omega$.
For the sake of simplicity we present an example, which as mentioned in \Cref{sec:extension} can be generalized, but the computations become quite technical.
Let us consider $\varphi_1$ the first eigenfunction of $-\Delta$ with Dirichlet boundary conditions, i.e.,
\begin{equation*}
	\begin{dcases}
		-\Delta \varphi_1 = \lambda_1 \varphi_1 & \text{in } \Omega, \\ 
		\varphi_1 = 0 & \text{on } \partial \Omega.
	\end{dcases}
\end{equation*}
We normalize it so that $\|\nabla \varphi_1 \|_{L^\infty} = 1$. It is known that there exists $C > 0$ such that
\begin{equation*}
	0 < C \dist(x,\partial \Omega) \le \varphi_1 (x) \le C^{-1} \dist(x,\partial \Omega), \qquad \forall x \in \Omega,
\end{equation*}
and near $\partial \Omega$ we have that
$$	| \nabla \varphi_1 (x)| \ge C > 0.$$
We focus our efforts on the particular case
\begin{equation}
	\label{E:superlinear}
	E = - \varphi_1^{-1-\gamma} \nabla \varphi_1 , \qquad \text{ for some } \gamma > 0,
\end{equation} 
and 
$f \in L^\infty_c (\Omega)$,  
the space  
of bounded functions  with compact support in $\Omega$.
The aim of this section is to prove
\begin{theorem}
	\label{thm:E singular boundary}
	Let $E$ be given by \eqref{E:superlinear}, $M = I$ and $f \in L_c^\infty (\Omega)$. 
	Then, there exists a unique $u \in H^1_0 (\Omega) \cap L^\infty (\Omega)$ such that $u E \in L^\infty (\Omega)$ and $u$ is a weak solution in the sense that \eqref{eq:weak solution} {holds}. Furthermore, $u$ is flat on the boundary in the sense that 
	\begin{equation}
	\text{for all } \alpha > 1~ \text{ we have that } \qquad 	|u (x) |\le C_\alpha \dist(x, \partial \Omega)^\alpha , \qquad \text{for a.e. } x \in \Omega.
	\end{equation}
\end{theorem}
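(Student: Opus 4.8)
The plan is to read $\diver E$ as a coercive, boundary-singular potential and to combine the a priori estimates of \Cref{thm:a priori diver E non-neg} with a barrier built from powers of $\varphi_1$. First I would record the identity obtained from $E=-\varphi_1^{-1-\gamma}\nabla\varphi_1$ together with $-\Delta\varphi_1=\lambda_1\varphi_1$:
\[
\diver E=(1+\gamma)\,\varphi_1^{-2-\gamma}|\nabla\varphi_1|^2+\lambda_1\,\varphi_1^{-\gamma}.
\]
This shows $\diver E\ge0$ distributionally; moreover $\diver E\ge\lambda_1(\max_\Omega\varphi_1)^{-\gamma}=:c_0>0$ everywhere, and since $|\nabla\varphi_1|\ge C>0$ near $\partial\Omega$ it blows up like $\dist(x,\partial\Omega)^{-2-\gamma}$ there. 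I would also observe that $|E|\sim\dist(\cdot,\partial\Omega)^{-1-\gamma}$ is \emph{not} in $(\elle2)^N$ (nor in $L^1$), so \Cref{thm:existence E in L^2} does not apply directly and both existence and the boundary estimate must be produced by an interior approximation.

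For existence I would solve on the subdomains $\Omega_n=\{\varphi_1>1/n\}$, where $E$ is bounded, hence in $L^N(\Omega_n)$, and still satisfies $\diver E\ge c_0$. \Cref{thm:Boccardo well-posedness} provides a unique $u_n\in W^{1,2}_0(\Omega_n)\cap L^\infty(\Omega_n)$, which I extend by zero. The crucial point is a bound independent of $n$: the second estimate of \Cref{thm:a priori diver E non-neg}, applied on $\Omega_n$, gives $\|u_n\|_{L^m(\Omega_n)}\le\frac{m}{m-1}\,c_0^{-1}\|f\|_{L^m}$ for every $m>1$, and letting $m\to\infty$ yields $\|u_n\|_\infty\le c_0^{-1}\|f\|_\infty$. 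Together with the energy bound $\|\nabla u_n\|_2\le(\alpha S_2)^{-1}\|f\|_{(2^*)'}$ coming from \Cref{thm:uniqueness}, I extract a limit $u\in W^{1,2}_0(\Omega)\cap\limi$ (weakly in $\w$, weakly-$*$ in $\limi$) and pass to the limit in the weak formulation tested against $v\in\cC_c^\infty(\Omega)$, where the drift term is harmless because $\nabla v$ is supported away from $\partial\Omega$. Uniqueness then follows from the comparison principle of \Cref{thm:uniqueness}, valid once $u\in\wli$.

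The heart of the matter is the boundary behaviour, which I would obtain by comparison with $w_\alpha=\varphi_1^{\alpha}$. A direct computation gives
\[
-\Delta w_\alpha+\diver(w_\alpha E)=\Big[-\alpha(\alpha-1)\varphi_1^{\alpha-2}-(\alpha-1-\gamma)\varphi_1^{\alpha-2-\gamma}\Big]|\nabla\varphi_1|^2+\lambda_1\big(\alpha\varphi_1^{\alpha}+\varphi_1^{\alpha-\gamma}\big).
\]
On a strip $\{0<\varphi_1<\varepsilon\}$ the most singular term is $-(\alpha-1-\gamma)\varphi_1^{\alpha-2-\gamma}|\nabla\varphi_1|^2$, which is strictly positive for $\alpha<1+\gamma$; choosing $\varepsilon$ so small that $f\equiv0$ on the strip (possible since $\supp f$ is compact), $w_\alpha$ is a genuine supersolution there. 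Taking $K=\varepsilon^{-\alpha}\|u\|_\infty$ we have $Kw_\alpha\ge|u|$ on $\{\varphi_1=\varepsilon\}$ and $Kw_\alpha=0=u$ on $\partial\Omega$, so the comparison principle (carried out on $\Omega_n$ with $E$ bounded, then passed to the limit) yields $|u|\le C_\alpha\dist(\cdot,\partial\Omega)^{\alpha}$, which is the flatness claimed in the statement, up to order $1+\gamma$. Since $|uE|\sim|u|\,\dist(\cdot,\partial\Omega)^{-1-\gamma}$, membership $uE\in\limi$ is exactly the borderline estimate at $\alpha=1+\gamma$, which I would reach by refining the barrier to $w=\varphi_1^{1+\gamma}+M\varphi_1^{1+\gamma+\sigma}$ with small $\sigma>0$, so as to dominate the now-leading Laplacian contribution $-\alpha(\alpha-1)\varphi_1^{\gamma-1}|\nabla\varphi_1|^2$. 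Once $uE\in\limi$ is known, the weak formulation \eqref{eq:weak solution} extends from $\cC_c^\infty(\Omega)$ to all $v\in W^{1,\infty}_0(\Omega)$ by density.

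The main obstacle is precisely this sharp flatness step. The sign of $\diver E$ makes $\varphi_1^\alpha$ a supersolution only up to the threshold $\alpha=1+\gamma$, and reaching that threshold—which is what $uE\in\limi$ requires, and which the barrier analysis also identifies as the genuine decay rate—forces one to control the subleading term $\varphi_1^{\gamma-1}$; for $0<\gamma\le1$ this term is itself singular, so a bare power is not a supersolution and a correction term (or a monotone iteration on the $\Omega_n$) is unavoidable. A secondary, purely technical point is that every comparison and every passage to the weak formulation must be performed on the bounded-drift domains $\Omega_n$ and then passed to the limit, precisely because $E\notin(\elle2)^N$ rules out a direct argument against the singular drift.
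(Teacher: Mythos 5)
Your strategy coincides with the paper's: regularise the problem (you shrink the domain to $\Omega_n=\{\varphi_1>1/n\}$, the paper keeps $\Omega$ and replaces $E$ by $E_\ell=-(\varphi_1+\tfrac1\ell)^{-1-\gamma}\nabla\varphi_1$; both are workable), extract a uniform $L^\infty$ bound from $\diver E\ge c_0>0$ by letting $m\to\infty$ in \eqref{eq:uniform estimate approximation 3}, and control the boundary behaviour by comparison with powers of $\varphi_1$. The one substantive divergence is in the barrier computation, and there your version is the consistent one. For $w_\beta=\varphi_1^\beta$ the coefficient of the most singular term $\varphi_1^{\beta-2-\gamma}|\nabla\varphi_1|^2$ is $(1+\gamma)-\beta$ (the $-\beta$ coming from $\nabla w_\beta\cdot E$, the $+(1+\gamma)$ from $w_\beta\diver E$), so a bare power is a supersolution near $\partial\Omega$ only for $\beta<1+\gamma$, exactly as you find. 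The paper takes $U=\tfrac1\alpha(\varphi_1+\tfrac1\ell)^\alpha$ but writes $U\diver E_\ell$ without the factor $\tfrac1\alpha$; this turns the correct coefficient $\tfrac{1+\gamma-\alpha}{\alpha}$ into $\gamma$, yields a supersolution for \emph{every} $\alpha$, and is what underlies the clause ``for all $\alpha\ge1$'' in the statement. That clause cannot be right: integrating the equation over $\{\varphi_1>\varepsilon\}$ and letting $\varepsilon\to0$ gives $\int_{\partial\Omega}(-\partial_n u+uE\cdot n)=\io f$, and if $|u|\le C_\alpha\dist(x,\partial\Omega)^\alpha$ for some $\alpha>1+\gamma$ both boundary terms vanish while $\io f>0$ whenever $f\ge0$, $f\not\equiv0$. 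So the true flatness order is $1+\gamma$ --- which is precisely what $uE\in\limi$ requires --- and your identification of this threshold as the crux is accurate rather than a defect of your method.

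Two genuine gaps remain in your write-up. First, the correction term you propose to reach the borderline exponent has the wrong sign: $+M\varphi_1^{1+\gamma+\sigma}$ contributes $-M\sigma\varphi_1^{\sigma-1}|\nabla\varphi_1|^2$ at leading order, which makes the supersolution inequality worse. You need $w=\varphi_1^{1+\gamma}\left(1-M\varphi_1^{\sigma}\right)$ with $0<\sigma<\min\{\gamma,1\}$, whose correction contributes $+M\sigma\varphi_1^{\sigma-1}|\nabla\varphi_1|^2$ and dominates the offending term $-\gamma(1+\gamma)\varphi_1^{\gamma-1}|\nabla\varphi_1|^2$ on a strip where $M\varphi_1^\sigma\le\tfrac12$. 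Second, your uniqueness step invokes \Cref{thm:uniqueness}, whose hypothesis $|E|\in\elle2$ fails here (indeed $|E|\sim\dist(x,\partial\Omega)^{-1-\gamma}$ is not even in $L^1$), so the identity $\io uE\cdot\nabla u^+\le0$ is not directly justified for the limit $u$ on all of $\Omega$. The paper's \Cref{lem:E blow up uniqueness} is built exactly for this: assuming only $E\in L^\infty_{loc}(\Omega)$ and $u|E|\in\elle2$, it justifies the integration by parts via cut-offs $\eta_m=\eta_0(m\varphi_1)$ and Hardy's inequality. Since your solution class is $u\in\wli$ with $uE\in\limi$, that lemma applies verbatim and is the correct way to close the uniqueness claim.
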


{ 
	We will give the proof below. First, we prove positivity in the interior.
\begin{proposition}
	In the assumptions of \Cref{thm:E singular boundary} if $f \ge 0$ and $\int_\Omega f > 0$, then $u > 0$ in $\Omega$.
\end{proposition}

\begin{proof}
	Let $\Omega_\eta = \{ x \in \Omega : d(x,\Omega) > \eta  \}$. Consider $u_\eta$ the solution of \eqref{eq:PDE} with $E$ given by \eqref{E:superlinear} and $u_\eta = 0$ in $\partial \Omega_\eta$. 
	Notice that $E$ is smooth on $\Omega_\eta$ for $\eta > 0$. 
	Since we already know from \Cref{thm:uniqueness} that $u \ge 0$ in $\Omega$, the classical comparison principle in $\Omega_\eta$ ensures that $u_\eta \le u$ for any $\eta \ge 0$. 
	Take $\eta > 0$ small enough so that $\int_{\Omega_\eta} f > 0$. 
	Then, by the ``classical'' strong maximum principle we get $u_\eta > 0$ in $\Omega_\eta$, and the proof is complete.
\end{proof}
}

 It is immediate to compute that
\begin{align*}
	\diver E &= (1 + \gamma) \varphi_1^{-2-\gamma} |\nabla \varphi_1|^2 -  \varphi_1^{-1-\gamma} \Delta  \varphi_1  \\
	&=  (1 + \gamma) \varphi_1^{-2-\gamma} |\nabla \varphi_1|^2 + \lambda_1  \varphi_1^{-\gamma} 
\end{align*}
Hence $	\diver E (x) \ge c \dist(x, \partial \Omega)^{-2-\gamma}$ near the boundary. Notice that $E$ and $\diver E$ are not in $L^1 (\Omega)$. We start the proof with a lemma.

\begin{lemma}
	\label{thm:existence singular boundary}
	In the assumptions of \Cref{thm:existence and uniqueness}, assume furthermore that $\diver E \in L^1_{loc} (\Omega)$. Then $u \diver E \in L^1 (\Omega)$, still satisfying estimate \eqref{eq:uniform bound 2}.
\end{lemma}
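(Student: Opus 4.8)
The plan is to reduce the statement to the already-established estimate \eqref{eq:uniform bound 2} of \Cref{thm:a priori diver E non-neg} by a mollification-and-Fatou argument, exploiting that the integrand $|u|\,\diver E$ is nonnegative. The point to keep in mind is that \Cref{it:sum u in L1} of \Cref{thm:a priori diver E non-neg} requires $\diver E \in L^1(\Omega)$, whereas here $\diver E$ is only \emph{locally} integrable. The product $u\,\diver E$ is nevertheless globally integrable precisely because $u$ is small where $\diver E$ is large (near $\partial\Omega$), and a limiting procedure along nonnegative approximations is exactly what captures this cancellation.

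First I would fix the approximating data already used in the proof of \Cref{thm:existence and uniqueness}: $f_k = T_k(f)$ and $E_k = \rho_k \star E$ with $\rho_k \ge 0$ mollifiers (extending $E$ by $0$ outside $\Omega$), so that $E_k \in W^{1,\infty}(\Omega)$, $\diver E_k = \rho_k \star \diver E \ge 0$, and, crucially, $\diver E_k \in L^\infty(\Omega) \subset L^1(\Omega)$ for each \emph{fixed} $k$ since $\Omega$ is bounded. For each $k$ the associated $u_k \in W^{1,2}_0(\Omega)$ is the unique solution for the smooth drift $E_k$ (cf. \Cref{thm:Boccardo well-posedness}), and $f_k \in L^\infty(\Omega) \subset L^{\frac{2N}{N+2}}(\Omega)$. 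Hence \Cref{it:sum u in L1} of \Cref{thm:a priori diver E non-neg} applies verbatim to $u_k$ and yields the uniform bound
\[
	\int_\Omega |u_k|\,\diver E_k \le \int_\Omega |f_k| \le \int_\Omega |f|,
\]
the last inequality being $|T_k(f)| \le |f|$.

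Next I would pass to the limit. From the a priori bounds of \Cref{thm:existence E in L^2} the sequence $u_k$ is bounded in $W^{1,q}_0(\Omega)$ with $q = \min\{2,m^*\}$, so by the compact Sobolev embedding a subsequence converges to $u$ a.e. in $\Omega$. On the other hand, $\diver E \in L^1_{loc}(\Omega)$ gives $\diver E_k = \rho_k \star \diver E \to \diver E$ in $L^1_{loc}(\Omega)$, hence (along a further subsequence) a.e. in $\Omega$; note that here only \emph{local} integrability of $\diver E$ is used, which is the whole gain over \Cref{it:sum u in L1}. Consequently $|u_k|\,\diver E_k \to |u|\,\diver E$ a.e. in $\Omega$, and since every integrand is nonnegative, Fatou's lemma gives
\[
	\int_\Omega |u|\,\diver E \le \liminf_{k\to\infty} \int_\Omega |u_k|\,\diver E_k \le \int_\Omega |f|.
\]
In particular $u\,\diver E \in L^1(\Omega)$ and \eqref{eq:uniform bound 2} holds, as claimed.

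The main obstacle I anticipate is the bookkeeping near $\partial\Omega$ in the mollification step: one must extend $E$ so that $E_k$ and $\diver E_k = \rho_k \star \diver E$ are genuinely defined and nonnegative up to the boundary, and one must ensure the approximations are exactly those producing the limit solution $u$ of \Cref{thm:existence and uniqueness}, so that the a.e.\ limit is the right function. Since $\partial\Omega$ is Lebesgue-null and all integrals are over $\Omega$, a.e.\ convergence on the interior suffices for Fatou; the only genuine new input beyond \Cref{it:sum u in L1} is the elementary fact that mollifications of an $L^1_{loc}$ function converge a.e., which is what upgrades the hypothesis $\diver E \in L^1(\Omega)$ to $\diver E \in L^1_{loc}(\Omega)$.
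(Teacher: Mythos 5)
Your proposal is correct and takes essentially the same route as the paper: the uniform bound $\int_\Omega |u_k|\,\diver E_k \le \int_\Omega |f|$ on the approximating sequence from \Cref{thm:existence and uniqueness}, a.e.\ convergence of $u_k$ and of $\diver E_k$ (using only $\diver E \in L^1_{loc}(\Omega)$), and Fatou's lemma. The only cosmetic difference is that the paper applies Fatou on compact subsets $K \Subset \Omega$ and then exhausts $\Omega$ by $K_h = \{\dist(x,\partial\Omega) \ge h\}$, whereas you apply it directly on $\Omega$; both are valid.
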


\begin{proof}%
	We consider the approximating sequence for \Cref{thm:existence and uniqueness}. For the approximation we know that
		\begin{equation*}
		\int_\Omega |u_n| \diver E_n \le \int_\Omega |f|.
		\end{equation*}
		Let us fix $K \Subset \Omega$. We have that
		\begin{equation*}
		\int_K   |u_n| \diver E_n \le \int_\Omega |f| .
		\end{equation*}
		Since we know that $\diver E_{n} \to \diver E$ in $L^1 (K)$, we have that, up to a further subsequence, $\diver E_n$ converges a.e. in $K$. Hence, applying Fatou's lemma
		\begin{equation*}
		\int_K   |u| \diver E \le \int_\Omega |f| .
		\end{equation*}
		Since this estimate is uniform in $K$, we can take $ K_h = \{ x \in \Omega : \dist(x, \partial \Omega) \ge h \}$ and deduce, as $h \to 0$, that \eqref{eq:uniform bound 2} holds. 
	\end{proof}
The solution found in \Cref{thm:E singular boundary}  
is unique in a certain class. We provide a uniqueness result extending \Cref{thm:uniqueness}, which  
can itself be generalised to a larger framework 
\begin{lemma}
	\label{lem:E blow up uniqueness}
	Assume that $u \in H_0^1 (\Omega)$, $E \in L^\infty_{loc} (\Omega)$, $u |E| \in L^2 (\Omega)$, $\diver E \ge 0$ distributionally, and $f \in L^{\frac{2N}{N+2}} (\Omega)$. Then
	\begin{equation*}
		\| \nabla u^+ \|_2 \le \frac{1}{\alpha S_2} \| f^+ \|_{\frac{2N}{N+2}}
	\end{equation*}
	In particular, there is at most one weak solution in $H_0^1 (\Omega)$ of the \eqref{eq:PDE}.
\end{lemma}	

\begin{proof}
	We want to repeat the argument in \Cref{thm:uniqueness}, i.e., taking $v = u_+$ in the weak formulation and using that
	\begin{equation*}
		-\int_\Omega u E \cdot \nabla u_+ \ge 0.
	\end{equation*}
	We prove this formula by approximation.
	Take $\eta \in C_c^\infty (\Omega)$. 
	There exists $K \Subset \Omega$ and $\phi_m \in C_0^\infty(K)$ such that $\phi_m \to u_+ \eta $ in $H_0^1(\Omega)$. We have that
	\begin{equation*}
		- \int_\Omega \phi_m E  \cdot \nabla \phi_m =\left  \langle \diver E , \frac{\phi_m^2}{2} \right \rangle \ge 0.
	\end{equation*}
	Since $E \in L^\infty (K)$, we pass to the limit to deduce
	\begin{equation*}
		- \int_\Omega (u_+ \eta) \cdot E \nabla (u_+ \eta)  \ge 0.
	\end{equation*}
	Now we expand
	\begin{equation*}
		\int_\Omega (u_+ \eta)  E  \cdot \nabla (u_+ \eta)  = \int_\Omega u_+^2 \eta E  \cdot \nabla \eta + \int_\Omega u_+ \eta^2 E  \cdot \nabla u_+
	\end{equation*}
	Now we take $\eta_m \nearrow 1$. In particular $\eta_m (x) = \eta_0 ( m \varphi_1(x)   )$ where $\eta_0$ is non-decreasing, $\eta_0(s) = 0$ if $s\le 1$ and $\eta_0(s) = 1$ if $s > 2$. 
	Clearly $\|\nabla \eta_m\|_{L^\infty} \le Cm$.
	Since $u_+ \in H_0^1(\Omega)$, then $u_+(x) / \varphi_1(x) \in L^2 (\Omega)$ by Hardy's inequality. And we compute
	\begin{equation*}
		\left| \int_\Omega u_+^2 \eta_m \cdot E \nabla \eta_m \right| \le \int_{ \varphi_1(x) \le \frac 1 m  } \frac {u_+} {\varphi_1} \frac{\varphi_1 }{m} |u E| C  m \le C \int_{ \varphi_1(x) \le \frac 1 m  } \frac {u_+} {\varphi_1} |u E|  \to 0
	\end{equation*}
	since $\frac {u_+} {\varphi_1}|uE| \in L^1(\Omega)$ and the size of the domain tends to zero. We conclude, by Dominated Convergence that
	\begin{equation*}
		0 \ge  \int_\Omega (u_+ \eta_m ) \cdot E \nabla (u_+ \eta_m ) \to \int_\Omega u_+ E \cdot \nabla u_+ = \int_\Omega u E \cdot u_+. \qedhere 
	\end{equation*} 
\end{proof}

We are finally ready to prove the result.

\begin{proof}[\textbf{Proof of \Cref{thm:E singular boundary}}]
	The uniqueness claim is proven in \Cref{lem:E blow up uniqueness}. We now prove the existence and bounds by approximation.
	We can assume, without loss of generality, that $f \ge 0$, and construct approximations
	of $E$ given by
	\begin{equation*}
		E_\ell = - ( \varphi_1 + \tfrac 1 \ell )^{-1-\gamma} \nabla \varphi_1 .
	\end{equation*}
	Clearly $E_\ell \in L^\infty (\Omega)$. 
	These {satisfy the assumptions of} \Cref{thm:existence E in L^2}. 
	Hence, there exists a weak solution $u_\ell \in H^1 _0 (\Omega)$ {of \eqref{eq:PDE} where $E = E_\ell$}. 
	We compute
	\begin{align*}
		\diver E_\ell
		&=  (1 + \gamma)( \varphi_1 + \tfrac 1 \ell )^{-2-\gamma}  |\nabla \varphi_1|^2 + \lambda_1  ( \varphi_1 + \tfrac 1 \ell )^{-1-\gamma} \varphi_1 .
	\end{align*}
	This is non-negative. Hence, due to \Cref{thm:uniqueness} we have that
	\begin{equation*}
		\|\nabla u_\ell \|_{L^2} \le C \|f \|_{L^\infty} .
	\end{equation*}
	Splitting the behaviour near the boundary and away from the boundary, it is easy to see that $\diver E_\ell \ge c_0 > 0$ uniformly. 
	Therefore, due to \Cref{thm:a priori diver E non-neg} we have that
	\begin{equation}
		\label{eq:E singular boundary Linfty 1}
		\|u_\ell \|_{L^\infty} \le \frac{\| f \|_{L^\infty} }{c_0}.
	\end{equation}
	Now we must construct barrier functions. Select a single $\alpha > 1$ and the barrier
	\begin{equation*}
		U = \tfrac 1 \alpha (\varphi_1 + \tfrac 1 \ell)^\alpha. 
	\end{equation*}
	We drop the dependence on $\ell$ and $\alpha$ to make the presentation below more readable.
	Plugging it into the equation we get
	\begin{align*}
		-\Delta U + \diver (U E_\ell) 
		&=  -\Delta U + \nabla U \cdot E_\ell + U \diver E_\ell \\
		&= -(\alpha - 1)( \varphi_1 + \tfrac 1 \ell )^{\alpha - 2}  |\nabla \varphi_1|^2 + \lambda_1  ( \varphi_1 + \tfrac 1 \ell )^{\alpha - 1} \varphi_1  \\
		&\qquad - ( \varphi_1 + \tfrac 1 \ell )^{\alpha - 2 - \gamma }  |\nabla \varphi_1|^2 \\
		&\qquad + (1 + \gamma)( \varphi_1 + \tfrac 1 \ell )^{\alpha -2-\gamma}  |\nabla \varphi_1|^2 + \lambda_1  ( \varphi_1 + \tfrac 1 \ell )^{\alpha -1-\gamma} \varphi_1 \\
		&\ge  \Big( \gamma ( \varphi_1 + \tfrac 1 \ell )^{-\gamma} -(\alpha - 1) \Big) ( \varphi_1 + \tfrac 1 \ell )^{\alpha - 2}  |\nabla \varphi_1|^2 .
	\end{align*}
	This is non-negative if
	$ \varphi_1 (x) + \frac 1 {m} \le  ( \tfrac{ \alpha - 1}{\gamma} )^{-\frac 1 \gamma}$.
	There exists $\eta_\alpha > 0$ small enough such that 
	\begin{equation*}
		 f(x) = 0  \qquad \text{and} \qquad  \varphi_1 (x)  \le \tfrac 1 2   ( \tfrac{ \alpha - 1}{\gamma} )^{-\frac 1 \gamma}, \qquad  \forall x \text{ such that } \dist(x, \partial \Omega) \le  \eta_\alpha.
	\end{equation*}
	We will use the neighbourhood of the boundary
	$	A_ \alpha  = \{ x \in \Omega: \dist(x, \partial \Omega)  < \eta_\alpha   \} $.
	Also, we consider the candidate super-solution
	\begin{equation*}
		\overline {u} (x) =  U (x) \left(  \dfrac{ \alpha }{   \min_{ \dist(x, \partial \Omega) = \eta_\alpha }  \varphi_1 (x)^\alpha  }   + \frac \alpha {c_0 }  \frac{  \| f \|_{L^\infty} }{  \min_{\dist(x, \partial \Omega) \ge \eta_\alpha}     \varphi_1 (x) ^\alpha      }    \right) .
	\end{equation*}
	We denote the constant on the right-hand side as $C_\alpha$.
	Using the first term of $C_\alpha$, 
	 $\overline u \ge u$ when $\dist(x, \partial \Omega) = \eta_\alpha$. Also, $\overline u = \frac 1 {m^\alpha} \ge 0 = u$ on $\partial \Omega$. 
	 Let us call
	\begin{equation*}
		\overline f =	-\Delta \overline u + \diver (\overline u E_\ell) .
	\end{equation*}
	By the 
	previous  
	computations, 
	if $\ell \ge 2 ( \tfrac{ \alpha - 1}{\gamma} )^{\frac 1 \gamma}$,
	we have $\overline f \ge 0 = f$ in $A_\alpha$, and clearly $\overline f \in L^\infty (A_\alpha)$. 
	Hence, due to \Cref{thm:uniqueness} we have that
	\begin{equation*}
		0 \le u_\ell (x) \le \overline {u} (x), \qquad  x \in A_\alpha .
	\end{equation*}
	Also, due to \eqref{eq:E singular boundary Linfty 1} and the second part of $C_\alpha$, we have that
	\begin{equation*}
		0 \le u_\ell (x) \le \overline {u} (x), \qquad  x \in \Omega \setminus A_\alpha .
	\end{equation*}
	Eventually, we deduce that for any $\alpha > 1$ we that
	\begin{equation*}
		0 \le u_\ell(x)  \le \tfrac {C_\alpha} \alpha (\varphi_1 + \tfrac 1 \ell)^\alpha , \qquad \forall x \in \Omega \text{ and } \ell \ge 2 ( \tfrac{ \alpha - 1}{\gamma} )^{\frac 1 \gamma}.
	\end{equation*}
	In particular, picking $\alpha = \gamma + 1$ we deduce that 
	\begin{equation*}
		|u_\ell E_\ell | \le   \tfrac {C_{\gamma + 1}} {\gamma + 1}  \|\nabla \varphi_1\|_{L^\infty} = \tfrac {C_{\gamma + 1}} {\gamma + 1} .
	\end{equation*}
	We deduce that, up to a subsequence,
	\begin{equation*}
		u_\ell \to u \text{ a.e. and strongly in } L^2 \qquad \text{ and }  \qquad u_\ell \rightharpoonup u \text{ weakly in } H_0^1 (\Omega).
	\end{equation*}
	This implies that $u_\ell E_\ell \to u E$ a.e. And hence $u E$ is bounded. Passing to the limit in the weak formulation by the Dominated Convergence Theorem, the result is proven.
\end{proof}

\begin{remark}
	Notice that the construction of the super-solution above can be done in any dimension $N \ge 1$. However, most of the results in the rest of the paper are only available for $N \ge 3$. 
\end{remark}

\begin{remark}
	\label{rem:19 de Ildefonso}
	For Schrödinger-type equations $-\Delta u + Vu = f$, it is known that if the potential $V$ is greater than $\dist(x,\partial \Omega)^{-2}$ and $f$ is compactly supported, then $u$ is flat {on} the boundary, in the sense that $|u| \le C\dist(x,\partial \Omega)^{1+\varepsilon}$. This means that $\partial_n u = 0$ on $\partial \Omega$. This means that it satisfies Dirichlet and Neumann homogeneous boundary conditions. And it can be extended by $0$ outside $\Omega$ with higher regularity than $H^1$.
	In contrast, the exponent $\gamma $ in the above result can
	not be taken as $\gamma =0$ in order to get flat solutions. Indeed, the
	convection term $E\cdot \nabla \varphi _{1}$, in the above computations, is
	more singular than the term $\varphi _{1} \diver E$. A very explicit example
	can be done in one dimension:  if we consider $E=-Cx^{-1}$ then this
	drift does not generate flat solutions since if we take $U(x)=x^{\alpha }$
	then 
	\[
	-U^{\prime \prime }+(EU)^{\prime }=(-\alpha x^{\alpha -1}-Cx^{\alpha
		-1})'=-(\alpha +C)(\alpha -1)x^{\alpha -2},
	\]%
	and this is a supersolution only if $\alpha \leq 1.$
\end{remark}

\begin{corollary}
	In the hypothesis of \Cref{thm:E singular boundary} replace $f \in L^\infty_c (\Omega)$ by 
	$$	
		|f(x)| \le C \dist( x ,\partial \Omega)^{\omega} \qquad \text{ for } {0 \le \omega \le \gamma + 1}.
	$$
	Then
	$$|u(x)| \le \dist( x ,\partial \Omega)^{\alpha} \qquad  \text{for all } \alpha \in \left (1 , \gamma + 2 - \omega \right). $$
\end{corollary}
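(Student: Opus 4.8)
The plan is to adapt the barrier/comparison argument used in the proof of \Cref{thm:E singular boundary}, the only genuinely new feature being that $f$ no longer vanishes in a neighbourhood of $\partial\Omega$. As before I would work with the bounded approximations $E_\ell = -(\varphi_1 + \tfrac1\ell)^{-1-\gamma}\nabla\varphi_1$, for which $\diver E_\ell \ge c_0 > 0$ uniformly, solve the approximate problems to obtain $u_\ell \in H_0^1(\Omega)$, establish two-sided pointwise bounds $|u_\ell| \le \overline u$ by the comparison principle of \Cref{thm:uniqueness}, and finally pass to the limit $\ell \to \infty$ exactly as in \Cref{thm:E singular boundary}, the a priori $L^\infty$ and $H^1$ bounds being furnished by \Cref{thm:a priori diver E non-neg}. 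Existence and uniqueness of the limit $u$ would be inherited from \Cref{thm:existence and uniqueness} and \Cref{lem:E blow up uniqueness}.

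The heart of the matter is the choice of barrier. Writing $w = \varphi_1 + \tfrac 1\ell$ I would again take $U = \tfrac1\alpha w^\alpha$ and reuse the computation from \Cref{thm:E singular boundary}, namely
\begin{equation*}
	-\Delta U + \diver(U E_\ell) \ge \Big( \gamma\, w^{-\gamma} - (\alpha - 1) \Big)\, w^{\alpha - 2}\, |\nabla\varphi_1|^2 .
\end{equation*}
Near $\partial\Omega$ one has $|\nabla\varphi_1| \ge c > 0$ and $w^{-\gamma} \to \infty$, so the bracket is positive and the right-hand side behaves like $c\,\dist(x,\partial\Omega)^{\alpha - 2 - \gamma}$. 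In \Cref{thm:E singular boundary} it sufficed that this quantity be $\ge 0$, because there $f$ vanished close to the boundary; here the new requirement is that it dominate $|f| \le C\,\dist(x,\partial\Omega)^{\omega}$.

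This comparison of boundary rates is the step I expect to be decisive. Since $\dist(x,\partial\Omega)\to 0$, the inequality $\dist^{\alpha - 2 - \gamma} \ge C\,\dist^{\omega}$ holds on a small enough one-sided neighbourhood precisely when $\alpha - 2 - \gamma \le \omega$; this inequality fixes the upper endpoint of the admissible range of $\alpha$, while $\alpha > 1$ is the flatness threshold. Concretely I would fix such an $\alpha$, choose $\eta_\alpha$ so small that the bracket is positive and the barrier source dominates $|f|$ on $A_\alpha = \{\dist(\cdot,\partial\Omega) < \eta_\alpha\}$, and then calibrate a constant $C_\alpha$ so that $\overline u = C_\alpha U$ lies above $u_\ell$ on the inner surface $\{\dist = \eta_\alpha\}$ (using the uniform $L^\infty$ bound on $u_\ell$) and on $\partial\Omega$. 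Because $f$ now carries no sign, I would run the same construction for the sub-solution $-\overline u$, so that $\overline f \ge |f| \ge \pm f$ gives $-\overline u \le u_\ell \le \overline u$ throughout $\Omega$; away from the boundary the asserted estimate is trivial, since there $\dist(x,\partial\Omega)^\alpha$ is bounded below.

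The main obstacle is thus the exponent bookkeeping together with the two-sidedness: matching $w^{\alpha-2-\gamma}$ against $\dist^{\omega}$ \emph{uniformly in} $\ell$, and keeping $C_\alpha$ independent of $\ell$ so that $|u_\ell(x)| \le C_\alpha\,\dist(x,\partial\Omega)^\alpha$ survives the passage to the limit. A secondary technical point is that one must verify that $f$ is integrable enough ($f \in L^m$ for an admissible $m$, which restricts how negative $\omega$ may be) for the existence theory of \Cref{thm:existence and uniqueness} and the $L^\infty$ estimate of \Cref{thm:a priori diver E non-neg} to apply, and that the resulting decay of $u$ is strong enough to place $uE$ in $L^2(\Omega)$ so that \Cref{lem:E blow up uniqueness} delivers uniqueness.
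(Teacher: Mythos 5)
Your proposal follows essentially the same route as the paper: the same barrier $U=\alpha^{-1}(\varphi_1+\tfrac1\ell)^\alpha$ and the same computation giving $-\Delta U+\diver(UE_\ell)\gtrsim(\varphi_1+\tfrac1\ell)^{\alpha-2-\gamma}$ near $\partial\Omega$, now required to dominate $|f|$ rather than merely be non-negative, with the constant in $\overline u$ enlarged by the corresponding factor; your two-sided bound via $-\overline u$ is a harmless variant of the paper's reduction to $f\ge 0$. Note only that your (correct) exponent condition $\alpha-2-\gamma\le\omega$ yields the upper endpoint $\gamma+2+\omega$ rather than the $\gamma+2-\omega$ printed in the statement, which appears to be a sign slip in the paper rather than a gap in your argument.
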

\begin{proof}
	We maintain the notation of the proof of \Cref{thm:E singular boundary}.
	We have already shown that, on a neighbourhood of the boundary,
	$$
	-\Delta U + \diver (U E_m) \ge \frac{\gamma}{2} (\varphi_1 + \tfrac 1 m)^{\alpha - 2 - \gamma } |\nabla \varphi_1|^2 \ge c_1 (\varphi_1 + \tfrac 1 m)^{\alpha - 2 - \gamma }
	\ge c_2 |f|.
	$$
	For $\alpha$ in the range $\left (1 , \gamma + 2 - \omega \right)$, we can take as a supersolutions for the approximating sequence
	\begin{equation*}
		\overline {u} (x) =  U (x) \left(  \frac{1}{c_2} +   \dfrac{ \alpha }{   \min_{ \dist(x, \partial \Omega) = \eta_\alpha }  \varphi_1 (x)^\alpha  }   + \frac \alpha {c_0 }  \frac{  \| f \|_{L^\infty} }{  \min_{\dist(x, \partial \Omega) \ge \eta_\alpha}     \varphi_1 (x) ^\alpha      }     \right) . \qedhere 
	\end{equation*}
	And the rest of the proof remains as in \Cref{thm:E singular boundary}.
\end{proof}

\section{Further remarks, extensions, and open problems}
\label{sec:extension}

\begin{enumerate}
	\item We point that the proofs of our estimates 
	{can be extended to}
	many non-linear settings. 
	
	\item \Cref{thm:E singular boundary} admits many generalisations. 
	For instance, one can consider
	the case $|E| \le c_0 \dist(x, \partial \Omega)^{-\gamma-1}$ with $\diver E \ge c_1 \dist(x, \partial \Omega)^{-\gamma -2}$ up to suitable conditions on the constants. Also,
	the techniques in this paper could be extended to 
	the situation where $ \dist(x, \partial \Omega)$ is replaced by $\dist(x, \Gamma)$ with a suitable part $\Gamma \subset \partial \Omega$. The case $\Gamma$ an interior manifold can also be studied.

	\item Including a non-negative potential. The same analysis can be performed on the equation
	\begin{equation*}
		-\diver (M(x) \nabla u) + a(x) u = - \diver(u E(x)) + f(x)
	\end{equation*}
	when $a \ge 0$. 
	{As above, our approach allows for less regularity in $a$ than most previous literature, e.g. $a \in L^1_{loc} (\Omega)$.}
	Furthermore, one will then obtain 
	\begin{equation*}
		\int_ \Omega |u| (a + \diver E ) \le \int_\Omega |f|.
	\end{equation*} 
	Hence, one can reduce the hypothesis to $a + \diver E \ge 0$ in the whole analysis.

	\item 
	The study of $a \equiv 1$ is useful in the study of the evolution problem
	\begin{equation*}
			u_t -\diver (M(x) \nabla u) +  \diver(u E(x)) = 0.
	\end{equation*} 
	For the study of this problem one can write $u_t + Au = 0$ 
	where
	\begin{equation*}
		A u =  -\diver (M(x) \nabla u) +  \diver(u E(x)).
	\end{equation*} 
	In order to obtain solutions in semigroup form in $L^p$ (where $ 1 \le p \le +\infty$), 
	following the theory of accretive operators, it is sufficient that,
	\begin{equation*}
		\| u \|_{L^p} \le \| u + \lambda A u \|_{L^p}.
	\end{equation*}
	Letting $f = u + \lambda A u$, this is precisely what we have proven above, where $M = \lambda I$ and $a \equiv 1$. See also \cite{bop}.
	
	\item 
	We point out that when $|E| \le |A| / |x|$, we have that, if $m>\frac{2N}{N+2}$ then $u|E| \in L^2 (\Omega)$. It seems possible to extend the uniqueness result \eqref{lem:E blow up uniqueness} to this setting.
\end{enumerate}

\section*{Acknowledgements}

This paper was started during the doctoral course of LB in Madrid in 2019, funded by the Instituto de Matemática Interdisciplinar.
The research of J. I. Díaz was partially supported by the project PID2020-112517GB-I00 of the DGISPI (Spain) and the Research Group MOMAT (Ref. 910480) of the UCM.
The research of DGC was supported by the Advanced Grant Nonlocal-CPD (Nonlocal PDEs for Complex Particle Dynamics:
Phase Transitions, Patterns and Synchronization) of the European Research Council Executive Agency (ERC) under the European Union’s Horizon 2020 research and innovation programme (grant agreement No. 883363).

\end{document}